\newcommand{\Z}{\mathbb{Z}}
\newcommand{\Q}{\mathbb{Q}}
\newcommand{\R}{\mathbb{R}}
\newcommand{\N}{\mathbb{N}}
\DeclarePairedDelimiter\floor{\lfloor}{\rfloor}
\DeclarePairedDelimiter\abs{\lvert}{\rvert}
\DeclarePairedDelimiter\fract{\lbrace}{\rbrace}
\newcommand{\abst}[1]{\left\| #1 \right\|}
\theoremstyle{definition}
	\newtheorem{ex}{Example}
	\newtheorem{probl}{Problem}
	\newtheorem{algo}{Algorithm}
	\newtheorem{thm}{Theorem}
    \newtheorem{lem}{Lemma}
    \newtheorem{cor}{Corollary}
    \newtheorem{conj}{Conjecture}
    \newtheorem*{claim*}{Claim}
    \newtheorem{rem}{Remark}
\crefname{lem}{lemma}{lemmas}
\begin{document}
\title[Bounds related to $2$LC]{Some Bounds Related to the $2$-adic Littlewood Conjecture}
\subjclass[2020]{11A55, 10A30, 11J13} 
\keywords{Continued fractions, mixed Littlewood conjecture}
\thanks{Research of the second author supported by NSERC grant 2024-03725.
}

\author[D. Vitorino]{Dinis Vitorino}
\address{D. Vitorino,
School of Computer Science
University of Waterloo
Waterloo, ON N2L 3G1
Canada}
\email{dinis@addition.pt}

\author[I. Vukusic]{Ingrid Vukusic}
\address{I. Vukusic,
School of Computer Science
University of Waterloo
Waterloo, ON N2L 3G1
Canada}
\email{ingrid.vukusic\char'100uwaterloo.ca}

\begin{abstract}
For every irrational real $\alpha$, let $M(\alpha) = \sup_{n\geq 1} a_n(\alpha)$ denote the largest partial quotient in its continued fraction expansion (or $\infty$, if unbounded). The $2$-adic Littlewood conjecture (2LC) can be stated as follows: There exists no irrational $\alpha$ such that $M(2^k \alpha)$ is uniformly bounded by a constant $C$ for all $k\geq 0$. In 2016, Badziahin proved (considering a different formulation of 2LC) that if a counterexample exists, then the bound $C$ is at least $8$. We improve this bound to $15$. Then we focus on a ``B-variant'' of 2LC, where we replace $M(\alpha)$ by $B(\alpha) = \limsup_{n\to \infty} a_n(\alpha)$.
In this setting, we prove that if $B(2^k \alpha) \leq C$ for all $k\geq 0$, then $C \geq 5$.
For the proof we use Hurwitz's algorithm for multiplication of continued fractions by 2. 
Along the way, we find families of quadratic irrationals $\alpha$ with the property that for arbitrarily large $K$ there exist $\beta, 2\beta, 4 \beta, \ldots, 2^K \beta$ all equivalent to $\alpha$.
\end{abstract}

\maketitle

\section{Introduction}

Every irrational real number $\alpha$ can be uniquely represented by its infinite simple continued fraction expansion
\[
	\alpha 
	= [a_0; a_1, a_2, \ldots]
	= a_0 + \frac{1}{a_1 + \frac{1}{a_2 + \frac{1}{\ddots}}},
\]
where $a_0$ is an integer and $a_1, a_2, \ldots$ are positive integers.
Continued fractions have many applications in various fields of mathematics, and they have been studied extensively for centuries, see, for example, \cite{RockettSzusz1992} for a reference book.

For $\alpha = [a_0; a_1, a_2, \ldots]$, the integers $a_n$ are called the \textit{partial quotients} of $\alpha$, and we denote the $n$-th partial quotient of any real irrational $x$ by $a_n(x)$.
For every bound $C\geq 1$, it is easy to construct irrational real numbers $\alpha$ with $a_n(\alpha) \leq C$ for all $n \geq 1$. Such numbers are called \textit{badly approximable}.
If we define
\[
	M(\alpha) := \sup_{n \geq 1} a_n(\alpha),
\]
then $\alpha$ being badly approximable is equivalent to $M(\alpha) < \infty$. Badly approximable numbers play an important role in the field of Diophantine approximation, and while they are easy to construct, it is in general a very hard problem to decide whether a given real number is badly approximable or not, and in fact ``most real numbers'' are not.

When multiplying a badly approximable number $\alpha$ by an integer $k$, the partial quotients might increase, but $k \alpha$ will be badly approximable as well.
For example, if $M(\alpha) = C$, then $M(2\alpha) \leq 2C + 3$. This follows, e.g., from \cite[Cor.~4]{Shallit1991}. From \Cref{algo:2algo} in \Cref{sec:algo} one can see that actually $M(2\alpha) \leq 2 M(\alpha) + 2$.
Sometimes, we can even multiply a number by 2 repeatedly without increasing the maximal partial quotients at all. For example, if we set $\alpha = (\sqrt{17}-1)/8$, we have $\alpha = [0; 2, \overline{1, 1, 3}]$, $2\alpha = [\overline{1;3,1}]$, and $4\alpha = [1;\overline{1,1,3}]$, where the bar indicates that the digits repeat periodically.
Now the following might seem like an innocent question:

\begin{probl}\label{probl:$2$LC}
Does there exist a constant $C$ and a real irrational $\alpha$ such that $M(2^k \alpha) \leq C$ for all $k \geq 0$? 
\end{probl}

It turns out that this question is not innocent at all. Any reader familiar with the mixed Littlewood conjecture will know that \Cref{probl:$2$LC} is in fact equivalent to the $2$-adic Littlewood conjecture; see the next section for more background.

We do not prove or disprove the $2$-adic Littlewood conjecture in this paper, but pose and answer questions inspired by \Cref{probl:$2$LC}.

In the next section, we give more background on the relation between the $2$-adic Littlewood conjecture and other problems. We also introduce a variant of \Cref{probl:$2$LC}, where $M(\alpha)$ is replaced with 
\[
	B(\alpha) := \limsup_{n\to \infty} a_n(\alpha).
\]
The smallest number $C$ that can satisfy the property in \Cref{probl:$2$LC} has been studied in \cite{Badziahin2016, Badziahin2025}.
In \Cref{sec:Badziahin}, we slightly improve the (until recently\footnote{Dmitry Badziahin has computed an even better bound \cite{Badziahin2025} since the first version of this paper was published on arXiv.}) best known lower bound on it \cite[Theorem~1.2]{Badziahin2016}.
Moreover, motivated by our proof, we give an equivalent formulation of \Cref{probl:$2$LC}.
In \Cref{sec:cf}, we recall some basic facts about continued fractions.
In \Cref{sec:algo}, we precisely describe an algorithm for multiplying continued fractions by $2$. 
This algorithm is well known; however, we present it in a way which we find more intuitive. This enables us to prove lower bounds for the $B$-variant of \Cref{probl:$2$LC} in \Cref{sec:B-bound}.
Along the way, we find out why digit patterns like $\overline{3,1,1}$ from the example $\alpha = (\sqrt{17}-1)/8$ mentioned above are special. In fact, in \Cref{sec:2equiv} we find an infinite family of digit patterns with the same special property. Our journey also leads to a characterization of all numbers with the property $B(\alpha) \leq 2$ and $B(2\alpha) \leq 2$ (\Cref{sec:B-bound}).
Finally, in \Cref{sec:problems}, we state some open problems.

\section{The $p$-adic Littlewood conjecture and related problems}\label{sec:connections}

Let $\abst{x} = \min_{k \in \Z} \abs{ k - x}$ denote the distance of $x$ to the nearest integer.
As mentioned in the introduction, numbers with the property $M(\alpha) < \infty$ are called badly approximable.
Equivalently, one can define
\[
    m(\alpha) :=  \inf_{q \geq 1}q \abst{q\alpha},
\]
and say that $\alpha$ is badly approximable if and only if $m(\alpha) > 0$. Since this is related to \Cref{probl:$2$LC} and the $2$-adic Littlewood conjecture, let us briefly check the equivalence.
For $\alpha = [a_0;a_1, a_2, \ldots]$ we denote the truncation of $\alpha$ at its n-th partial quotient, $[a_0; a_1, \ldots, a_n]$, by $p_n/q_n$. This is called the $n$-th \textit{convergent} to $\alpha$. 
Convergents are famously particularly good approximations to $\alpha$. In fact, they are ``best approximations'' to $\alpha$ in the sense that if $q_n \leq q < q_{n+1}$ then $\abst{q\alpha} \geq \abst{q_n \alpha} = \abs{q_n \alpha - p_n}$.
Moreover, the following estimates are well known for $n\geq 0$:
\begin{equation}\label{eq:cont_frac_est}
    \frac{1}{(a_{n+1} + 2) q_n}
    < \abst{q_n\alpha}
    < \frac{1}{a_{n+1} q_n}.
\end{equation}
Multiplying the inequalities by $q_n$, using the best approximation property, taking the infimum, and recalling that $M(\alpha) = \sup_{n\geq 1} a_n(\alpha)$, we obtain
\begin{equation}\label{eq:M-m}
    \frac{1}{M(\alpha) + 2}
    \leq m(\alpha)
    \leq \frac{1}{M(\alpha)}.
\end{equation}
In particular, $m(\alpha) > 0$ if and only if $M(\alpha) < \infty$, as desired.

\medskip

The $p$-adic Littlewood conjecture ($p$LC) is a special case of the mixed Littlewood conjecture, which was introduced by de Mathan and Teuli\`{e} in 2004 \cite{DeMathanTeulie2004}.
Let $\abs{n}_p = p^{-v_p(n)}$ denote the $p$-adic absolute value of $n$, where $v_p(n) = \max \{ k \colon p^k \mid n\}$ is the $p$-adic valuation of the integer $n$.

\begin{conj}[$p$LC]\label{conj:pLC}
For every prime $p$ and irrational real $\alpha$, we have
\begin{equation*}
	\inf_{q \geq 1} q \cdot \abs{q}_p \cdot \abst{q \alpha} 
	= 0.
\end{equation*}
\end{conj}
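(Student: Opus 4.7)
The stated conjecture is the famous $p$-adic Littlewood conjecture, which is open for every prime $p$; the authors themselves say they do not prove or disprove it. So the best I can honestly propose is the natural first-step reduction that motivates the rest of the paper. The plan is to rewrite the expression $q \cdot \abs{q}_p \cdot \abst{q\alpha}$ in a form that isolates the $p$-adic factor. Any positive integer can be written as $q = p^k q'$ with $\gcd(q',p)=1$, so that $\abs{q}_p = p^{-k}$, and since $\abst{q\alpha} = \abst{q'(p^k\alpha)}$, the product becomes
\[
  q \cdot \abs{q}_p \cdot \abst{q\alpha} = q' \cdot \abst{q'(p^k\alpha)}.
\]
Taking the infimum over all $q\geq 1$ (equivalently, over all $k\geq 0$ and all $q'$ coprime to $p$, then relaxing to all $q'\geq 1$, which is harmless for the infimum) would yield
\[
  \inf_{q\geq 1} q \cdot \abs{q}_p \cdot \abst{q\alpha} = \inf_{k\geq 0} m(p^k\alpha),
\]
where $m(\beta) = \inf_{q'\geq 1} q'\abst{q'\beta}$ is the quantity already introduced above.

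Next I would invoke the sandwich \eqref{eq:M-m}, that is, $1/(M(\beta)+2)\leq m(\beta)\leq 1/M(\beta)$, applied to $\beta = p^k\alpha$. This shows $\inf_{k\geq 0} m(p^k\alpha) = 0$ if and only if $\sup_{k\geq 0} M(p^k\alpha) = \infty$. Hence a proof of $p$LC would reduce to establishing that no irrational $\alpha$ can have the sequence $M(\alpha),\,M(p\alpha),\,M(p^2\alpha),\ldots$ uniformly bounded by a constant. For $p=2$ this is precisely the negation of \Cref{probl:$2$LC}, which is the form the authors go on to attack in later sections.

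The hard part is everything after this reduction. No method is currently known to rule out a uniform bound on $M(p^k\alpha)$. A plausible strategy would be to exploit an algorithm for multiplying continued fractions by $p$ (as developed in \Cref{sec:algo} for $p=2$) to track the combinatorial evolution of the partial quotients and show that a uniform bound forces an algebraic or combinatorial rigidity on $\alpha$ incompatible with irrationality; but controlling this dynamical system over all $k$ simultaneously is the crux, and the only unconditional results available are lower bounds on the hypothetical constant $C$, which is exactly what the present paper improves. A decisive proof of $p$LC would presumably need a genuinely new input, for instance from homogeneous dynamics along the lines of the Einsiedler--Katok--Lindenstrauss theorem on the original Littlewood conjecture, and I would not expect to obtain it by continued-fraction combinatorics alone.
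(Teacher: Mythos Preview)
Your proposal is correct: the statement is a conjecture, not a theorem, and the paper offers no proof of it either. The reduction you sketch --- writing $q = p^k q'$, collapsing the product to $\inf_{k\geq 0} m(p^k\alpha)$, and invoking \eqref{eq:M-m} to pass to $\sup_{k\geq 0} M(p^k\alpha)$ --- is exactly the equivalence the paper presents immediately after stating the conjecture, so your treatment matches the paper's.
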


We briefly check that deciding whether \Cref{conj:pLC} is true for $p = 2$ is equivalent to solving \Cref{probl:$2$LC}. We remark that of course we could also formulate \Cref{probl:$2$LC} for general $p$; however, in this paper we choose to focus on the case $p=2$ throughout.

First, note that 
\begin{align}
    \inf_{q \geq 0} q \cdot \abs{q}_2 \cdot \abst{q\alpha} 
    &= \inf_{\substack{k \geq 0,\ q' \geq 0, \\ \gcd(q',2)=1}} 2^k q' \cdot \abs{2^k q'}_2 \cdot \abst{2^k q'\alpha} \nonumber \\
    &= \inf_{\substack{k \geq 0,\ q' \geq 0, \\ \gcd(q',2)=1}} q' \cdot \abst{2^k q'\alpha}
    = \inf_{\substack{k \geq 0 \\ q \geq 1}} q \cdot \abst{q \cdot 2^k \alpha}
    = \inf_{k\geq 0} m(2^k \alpha). \nonumber 
\end{align}
Then we see from inequality~\eqref{eq:M-m} that 
$\inf_{q \geq 0} q \cdot \abs{q}_2 \cdot \abst{q \alpha}  = \inf_{k\geq 0} m(2^k \alpha)> 0$ if and only if $\sup_{k\geq 0} M(2^k \alpha) < \infty$. In other words, we have a counterexample to \Cref{conj:pLC} with $p = 2$ if and only if the answer to \Cref{probl:$2$LC} is ``yes''.
We reformulate \Cref{probl:$2$LC} as a conjecture:
\begin{conj}[$2$LC]\label{conj:$2$LC}
    For every irrational real $\alpha$ we have $\sup_{k\geq 0} M(2^k \alpha) = \infty$.
\end{conj}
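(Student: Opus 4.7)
The plan is to attempt a proof by contradiction. Assume that there exists an irrational $\alpha$ and a finite constant $C$ with $M(2^k \alpha) \leq C$ for all $k \geq 0$. Writing $\alpha_k = 2^k \alpha$ and $\alpha_k = [a_0^{(k)}; a_1^{(k)}, a_2^{(k)}, \ldots]$, one would regard $(\alpha_k)_{k\geq 0}$ as a forward orbit in the compact shift space $\Sigma_C$ of one-sided sequences of positive integers bounded by $C$, modulo integer parts. The target then becomes to show that $\Sigma_C$ contains no such forward orbit arising from an irrational.

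The first step is to make the dynamics on $\Sigma_C$ explicit via Hurwitz's algorithm (to be presented in \Cref{sec:algo}): a bounded window of partial quotients of $\alpha_k$ determines a bounded window of partial quotients of $\alpha_{k+1}$, so the map $\alpha \mapsto 2\alpha$ descends to a local, and hence continuous, self-map of $\Sigma_C$. From here I would try to combine two ingredients. The first is a growth estimate: by \eqref{eq:cont_frac_est} together with \eqref{eq:M-m}, the best-approximation denominators $q_n^{(k)}$ of $\alpha_k$ are controlled on both sides in terms of $\prod_{j \leq n} a_j^{(k)}$ uniformly in $k$, so having small partial quotients simultaneously at many positions and many scales should force nontrivial algebraic coincidences among the $\alpha_k$. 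The second is a rigidity input: the subset of $\Sigma_C$ corresponding to quadratic $\alpha$ is at most countable and effectively enumerable for small $C$, which is precisely the mechanism exploited in \cite{Badziahin2016} and sharpened in \Cref{sec:Badziahin}; thus the quadratic case reduces in principle to a finite-state search.

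The main obstacle will be ruling out non-quadratic $\alpha$. For a quadratic irrational every $\alpha_k$ has eventually periodic continued fraction expansion, so the $2$-multiplication dynamics on periods of bounded length collapses to a finite-state problem amenable to exhaustive analysis; this is the route behind Badziahin's lower bound and its improvement in \Cref{sec:Badziahin}. For a general irrational, however, the orbit of $\alpha$ in $\Sigma_C$ may explore arbitrarily non-recurrent patterns, and ruling out every bounded trajectory appears to demand a global invariant going beyond the local Hurwitz rewriting rule, perhaps a $2$-adic height or an entropy obstruction, that is not currently available. This is precisely why \Cref{conj:$2$LC} remains open, and why the present paper restricts itself to pushing up the lower bound on the hypothetical $C$ rather than ruling out its existence entirely.
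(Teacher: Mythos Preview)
The statement you were asked to prove is a \emph{conjecture}, not a theorem: the paper does not prove \Cref{conj:$2$LC} and says so explicitly in the introduction (``We do not prove or disprove the $2$-adic Littlewood conjecture in this paper''). Your proposal is therefore not being compared against any proof in the paper, and you yourself correctly conclude in your final paragraph that the argument cannot be completed and that the conjecture remains open. So what you have written is not a proof but an informed discussion of why a direct attack stalls.

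A couple of factual corrections to your sketch are still worth making. First, the mechanism in \cite{Badziahin2016} and in \Cref{sec:Badziahin} is not an enumeration of quadratic irrationals in $\Sigma_C$; it is an interval-subdivision algorithm that excludes \emph{all} irrationals (quadratic or not) with a given prefix by detecting a large partial quotient in some $2^k\alpha$. The method is oblivious to whether $\alpha$ is quadratic. Second, the quadratic case of $2$LC is already settled unconditionally by Aka--Shapira \cite{AkaShapira2018}, as the paper notes; it does not reduce to a finite-state search carried out here. Finally, your claim that Hurwitz's algorithm makes $\alpha\mapsto 2\alpha$ a continuous self-map of the full shift $\Sigma_C$ is delicate: the clean-up of zeros means the number of output digits determined by a fixed input window is not uniformly bounded below (cf.\ \Cref{lem:noofdigitsproduced}), so continuity in the product topology requires more care than you indicate.
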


For context, let us list a few known facts about $p$LC.
\begin{itemize}
    \item The set of counterexamples has Hausdorff dimension zero \cite{EinsiedlerKleinbock2007}. (For comparison, the set of badly approximable numbers has Lebesgue measure 0 and Hausdorff dimension 1.)

    \item Quadratic irrationals cannot be counterexamples. In fact, Aka and Shapira \cite{AkaShapira2018} showed a much stronger result on the distribution of digit patterns in the continued fraction expansions of $b^k \alpha$ for quadratic irrational $\alpha$.

    \item More generally, if the complexity of the sequence of partial quotients of a real number $\alpha$ grows too rapidly or too slowly, it cannot be a counterexample \cite{BadziahinBugeaudEinsiedlerKleinbock2015}. There also exist recent results on the complexity of the $p$-ary representation of a putative counterexample to $p$LC \cite{BlackmanKristensenNorthey2024}.
\end{itemize}

Moreover, Bazdiahin \cite{Badziahin2016} used a simple algorithm to prove the following theorem:
\begin{thm}[Badziahin, 2016]\label{thm:Badziahin}
For every $\alpha \in \R$, there exists a positive integer $q$ such that
\[
	q \cdot \abs{q}_2 \cdot \abst{q\alpha} < 1/9.
\]
\end{thm}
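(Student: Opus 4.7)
The strategy reduces the statement to finding a good odd-denominator rational approximation to some $2$-adic shift of $\alpha$. Writing $q = 2^k q'$ with $q'$ odd, one has $q \cdot |q|_2 \cdot \|q\alpha\| = q' \cdot \|q'\beta\|$, where $\beta := 2^k\alpha$, exactly as in the calculation preceding \Cref{conj:$2$LC}. Hence I would aim to produce $k \geq 0$ and an odd $q' \geq 1$ with $q'\|q'\beta\| < 1/9$.

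For such a $\beta$, estimate \eqref{eq:cont_frac_est} gives $q_n(\beta)\|q_n(\beta)\beta\| < 1/a_{n+1}(\beta)$, so it is enough to locate an index $n$ (for some $k\geq 0$) at which $q_n(\beta)$ is odd and $a_{n+1}(\beta) \geq 9$. Parities of the convergent denominators are governed by the recursion $q_n = a_n q_{n-1} + q_{n-2}$ together with $\gcd(q_{n-1}, q_n) = 1$, which forces at most one of any two consecutive denominators to be even. Hence odd-denominator convergents are plentiful; what is missing is a guarantee that one of them is followed by a partial quotient of size at least $9$.

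To secure such a guarantee I would invoke the algorithm from \Cref{sec:algo} that transforms the continued fraction of $\beta$ into that of $2\beta$. The upper bound $M(2\beta) \leq 2M(\beta)+2$ mentioned in the introduction shows that large partial quotients of $\beta$ produce nearby large partial quotients of $2\beta$, and the algorithm dictates their precise location, which can be tracked relative to the parity pattern of $q_n$. The plan, for each local configuration of $\beta$ around a ``large'' partial quotient sitting over an even-denominator convergent, is to run the doubling algorithm one or two steps until the corresponding large entry is shifted onto an odd-denominator position of some $2^k\beta$. This then yields the desired $q = 2^k q_n(2^k\alpha)$ via the estimate \eqref{eq:cont_frac_est}. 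The threshold $1/9$ in the statement corresponds to needing only $a_{n+1} \geq 9$ at an odd-denominator convergent of some shift, which is a rather mild demand.

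The hard part will be that last case analysis: verifying, for every local arrangement of partial quotients near a large entry attached to an even-denominator convergent, that a bounded number of Hurwitz doubling steps relocates a large entry to an odd-denominator position. I expect this enumeration to be the ``simple algorithm'' referred to in \Cref{thm:Badziahin}: reducing the whole proof to checking finitely many patterns of consecutive partial quotients and their parities.
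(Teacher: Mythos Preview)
The paper does not give its own proof of this statement; it is quoted as Badziahin's result. The paper instead proves the stronger bound $1/15$ in \Cref{cor:Badziahin}, deduced from the computational \Cref{thm:M-bound}. That deduction reveals a detour in your plan: you insist on an \emph{odd} convergent denominator so that $|q|_2 = 2^{-k}$ exactly, but this is unnecessary. For any convergent denominator $q_{n-1}^{(k)}$ of $\beta = 2^k\alpha$, setting $q = 2^k q_{n-1}^{(k)}$ gives $|q|_2 \leq 2^{-k}$, hence $q\,|q|_2 \leq q_{n-1}^{(k)}$, and \eqref{eq:cont_frac_est} then yields $q\,|q|_2\,\|q\alpha\| < 1/a_n^{(k)}$ regardless of parity. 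So the whole parity-tracking layer and the plan to ``relocate a large entry to an odd-denominator position via Hurwitz doublings'' is superfluous.

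Once that is stripped away, what remains to be proved is simply that $\sup_{k\geq 0} M(2^k\alpha) \geq 9$, and here your proposal has a genuine gap: you have not done the work, and the method you sketch is not the one used. Both Badziahin and the present paper proceed by an interval-subdivision search over continued-fraction \emph{prefixes}: one covers $\{\alpha : M(\alpha)\leq C\}$ by cylinder sets $S_{a_1\cdots a_n}$, computes bounds for $2^k\alpha$ on each cylinder, and excludes cylinders one by one until none remain (for $C$ up to $7$ in Badziahin's case, $14$ here). This is a finite computer-verified enumeration over global prefixes, not a local pattern analysis around a single large digit. Your proposed ``case analysis of local configurations'' is a different idea whose finiteness and termination at the threshold $9$ you have not established.
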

In other words, for a counterexample $\alpha$ to $2$LC we would have $0 < \inf_{q \geq 1} q \cdot \abs{q}_2 \cdot \abst{q \alpha} < 1/9$.
We will improve this bound to $1/15$ in the next section (\Cref{cor:Badziahin}), using a different method for subdividing the intervals in the algorithm.
In view of inequality \eqref{eq:M-m}, this is of course closely related to proving a lower bound for $\sup_{k\geq 0}M(2^k \alpha)$. 
In fact, we prove $\sup_{k\geq 0}M(2^k \alpha) \geq 15$ in \Cref{thm:M-bound}.
We should mention that since the first version of this paper was published on arXiv, Badziahin \cite{Badziahin2025} has improved the bound $1/9$ in Theorem~\ref{thm:Badziahin} to $1/25$, which is stronger than our bound.

\medskip

Now let us shift our attention from $M(\alpha) = \sup_{n \geq 1} a_n(\alpha)$ to $B(\alpha) = \limsup_{n \to \infty} a_n(\alpha)$.\footnote{To avoid confusing $M$ and $B$, it might help to think of ``$M$'' as standing for ``$\max$''. In a moment, we will also formulate a ``$B$-variant'' of \Cref{conj:$2$LC}, which one could unkindly think of as a ``B movie'' compared to the original $2$LC.}
This is natural because in Diophantine approximation one is typically more interested in ``how well a number is eventually approximable'' rather than in the first finitely many partial quotients. In fact, $B(\alpha)$ is closely related to the quantity
\[
    c(\alpha) := \liminf_{q \to \infty} q \abst{q \alpha},
\]
sometimes referred to as the \textit{Lagrange constant}.
Just like $M$ and $m$ are connected via \eqref{eq:M-m}, we can use inequality~\eqref{eq:cont_frac_est} and the best approximation property of convergents, to see that
\begin{equation}\label{eq:c-B}
    \frac{1}{B(\alpha) + 2}
    =\liminf_{n \to \infty} \frac{1}{a_{n+1} + 2} 
    \leq c(\alpha)
    \leq \liminf_{n \to \infty} \frac{1}{a_{n+1}}
    = \frac{1}{B(\alpha)}.
\end{equation}
    
In \cite[Problem 3.1]{BadziahinBugeaudEinsiedlerKleinbock2015} the authors posed the following problem:
\begin{probl}[Badziahin et al., 2015]\label{probl:BadziahinEtAl}
Prove or disprove that every badly approximable real number $\alpha$ satisfies
\[
    \lim_{n \to \infty} c(n\alpha) = 0.
\]
\end{probl}
If the statement in \Cref{probl:BadziahinEtAl} is true, then it implies $\lim_{k\to \infty} c(2^k \alpha) = 0$. Weakening this statement even further, we would get $\inf_{k\geq 0} c(2^k \alpha) = 0$, and inequality \eqref{eq:c-B} would imply $\sup_{k\geq 0} B(2^k \alpha) = \infty$ for all real irrational $\alpha$.
This is a ``$B$-variant'' of $2$LC! Since $B(x) \leq M(x)$, this would be a stronger statement than $2$LC. 
We are not confident that this statement is true, but for consistency with $2$LC, we formulate it as a conjecture:

\begin{conj}[B$2$LC]\label{conj:B$2$LC}
For every irrational real $\alpha$ we have $\sup_{k\geq 0} B(2^k \alpha) = \infty$.
\end{conj}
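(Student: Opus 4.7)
The natural attempt is by contradiction: suppose there exist an irrational $\alpha$ and a constant $C$ with $B(2^k\alpha) \leq C$ for every $k\geq 0$. Then for each $k$ there is some index $N_k$ with $a_n(2^k\alpha) \leq C$ for all $n \geq N_k$, so that the tail
\[
    \beta_k := [0;\, a_{N_k}(2^k\alpha), a_{N_k+1}(2^k\alpha), \ldots]
\]
lies in the compact space $X_C$ of continued fractions whose partial quotients are all bounded by $C$. The goal is to derive a contradiction from the fact that the tails $\beta_k$ are related to one another by Hurwitz's multiplication-by-$2$ algorithm modulo the finitely many exceptional initial digits at each level.

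The plan is to formalize the doubling operation as a (multi-valued) map $T \fdg X_C \dashrightarrow X_C$ that tracks, under Hurwitz's algorithm, which bounded digit tails can arise from doubling another bounded digit tail. If I understand Section~\ref{sec:algo} correctly, the algorithm is local in the sense that it rewrites short windows of digits; hence $T$ is described by a finite set of local substitution rules, and the set of sequences which admit an entire $T$-forward orbit inside $X_C$ forms a subshift $Y_C$. A counterexample to B$2$LC would, after passing to a common tail, yield an element of $Y_C$; so the goal reduces to showing $Y_C = \emptyset$ for every $C$. One would first try to show that $Y_C$ is invariant under the usual shift and closed, then quantify via a pigeonhole / equidistribution argument on $(\beta_k)$ that any element must force a digit exceeding $C$ to appear under some number of iterations of $T$, contradicting closure. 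This mirrors how Section~\ref{sec:B-bound} presumably handles small $C$ by brute enumeration of allowable digit patterns, and the task is to scale that analysis to arbitrary $C$.

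The main obstacle is precisely the phenomenon advertised in the abstract: there exist nontrivial families of quadratic irrationals $\alpha$ for which $\beta, 2\beta, \ldots, 2^K\beta$ are all equivalent to $\alpha$ for arbitrarily large $K$. Such examples show that $T$ admits long, structured orbits inside $X_C$ for small $C$, and the combinatorics of Hurwitz's rules alone do not rule out that these orbits could extend to infinite orbits as $C$ grows. Without a monotone quantity along doubling orbits (an ``entropy'' or weight function that strictly increases, forcing eventual escape from $X_C$), the purely combinatorial route seems insufficient. My honest expectation is that this plan reduces B$2$LC to a sharper structural statement about the subshift $Y_C$ — plausibly that its topological entropy is zero, so that it contains only preperiodic orbits, which would then be killed by the Aka–Shapira result on quadratic irrationals — rather than yielding a complete proof. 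Any genuine attack likely requires a dynamical input from outside the continued fraction formalism, which is why I would be surprised if B$2$LC is resolved by elementary means alone.
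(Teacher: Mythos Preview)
The statement you are addressing is a \emph{conjecture}, not a theorem; the paper does not prove it and in fact lists it among the open problems in \Cref{sec:problems}. There is therefore no ``paper's own proof'' to compare against. What the paper does establish is the partial bound $\sup_{k\geq 0} B(2^k\alpha) \geq 5$ (\Cref{thm:B-bound}), obtained by exactly the kind of window-by-window digit analysis under Hurwitz's algorithm that you allude to for small $C$.

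Your proposal is candid about its own incompleteness, and your diagnosis of the central obstruction --- quadratic irrationals $\alpha$ with $\alpha \sim \alpha/2 \sim (\alpha+1)/2$, which generate arbitrarily long chains $\beta \sim 2\beta \sim \dots \sim 2^K\beta$ --- is precisely what the paper isolates in \Cref{sec:2equiv} and \Cref{rem:5} as the reason the hand analysis stalls at $C=5$. One technical point deserves tightening: the map $T$ you want is not naturally $X_C \to X_C$, since doubling a tail in $X_C$ can emit digits up to roughly $2C$ before the output settles, and the ``finitely many exceptional initial digits at each level'' are not a priori uniformly bounded in $k$; so the passage to a closed shift-invariant $Y_C$ needs more care than you indicate (one has to track the three algorithm states of \Cref{lem:parities} and argue via compactness on the state-augmented system). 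Since you already concede the argument does not close, this is a refinement rather than a fatal objection. Your instinct that a complete proof would need dynamical input beyond continued-fraction combinatorics --- and that the periodic points would then be handled by something like Aka--Shapira --- is consistent with the paper's own outlook.
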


As already mentioned, B$2$LC is a priori stronger than $2$LC. Conversely, it might be easier to find a counterexample to B$2$LC than to $2$LC. For example, a number $\alpha$ with the properties $\sup_{n\geq 1} a_n(2^n\alpha) = \infty$ and $a_i(2^k\alpha) \leq C$ for all $i \neq k$ and some constant $C$ would be a B$2$LC counterexample, but not a $2$LC counterexample. We know how to prove that a number of this specific shape cannot exist, but have been unable to prove that B$2$LC and $2$LC are equivalent. In fact, we do not know whether this is true (see \Cref{probl:$2$LC-B$2$LC}).

Just like we do not solve $2$LC, we do not solve B$2$LC in the present paper either. However, as mentioned above, we will 
improve \Cref{thm:Badziahin} by proving $\sup_{k\geq 0} M(2^k \alpha) \geq 15$ (see \Cref{thm:M-bound} in the next section).
Similarly, for B$2$LC we will prove the lower bound $\sup_{k\geq 0} B(2^k \alpha) \geq 5$ in \Cref{thm:B-bound} in \Cref{sec:B-bound}. Proving the $B$-bound will turn out to be more complicated than proving the $M$-bound.
There seems to be a fundamental difference between both conjectures which renders the B-variant less tractable to computer-aided analysis.
This is justified by \Cref{thm:$2$LC-aequ} (\Cref{sec:Badziahin}) and \Cref{thm:B-counterex-m} (\Cref{sec:2equiv}).

\section{Improving the $M$-bound and reformulating $2$LC}\label{sec:Badziahin}
As promised, we start by proving a lower bound on the constant $C$ in \Cref{probl:$2$LC}:
\begin{thm}\label{thm:M-bound}
For every irrational real number $\alpha$ we have $\sup_{k\geq 0} M(2^k\alpha)\geq 15$.
\end{thm}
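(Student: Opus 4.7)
My plan is to argue by contradiction: suppose some irrational $\alpha$ satisfies $M(2^k\alpha) \leq 14$ for every $k \geq 0$. Since translation by an integer does not affect the partial quotients $a_n$ with $n \geq 1$, I may assume $\alpha \in (0,1)$, in which case every $a_n(2^k\alpha)$ with $n \geq 1$ belongs to $\{1, 2, \ldots, 14\}$. The objective is to derive a contradiction by combining interval subdivision with the explicit algorithm for multiplying continued fractions by $2$ developed in \Cref{sec:algo}.

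First I would set up the interval-refinement step. Prescribing $a_1(\alpha) = c_1$ restricts $\alpha$ to the cylinder interval $(1/(c_1+1),1/c_1]$, and prescribing more initial partial quotients yields nested cylinders. The condition $M(2^k\alpha) \leq 14$ for each $k$ imposes further cylinder conditions, phrased in terms of the digits of $2\alpha, 4\alpha, \ldots$ rather than $\alpha$ itself. Using the doubling algorithm from \Cref{sec:algo}, I would translate each condition on $a_n(2^k\alpha)$ into an equivalent restriction on the digits of $\alpha$ (together with any auxiliary carry/residue state that the algorithm requires in order to be deterministic). This produces a refined subdivision of $(0,1)$ into candidate cylinders that could conceivably host a counterexample.

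Iterating gives a tree whose nodes are candidate intervals and whose children correspond to digit-continuations of $\alpha$ compatible with the bounded-digit constraint propagated through successive doublings. The plan is to verify, by a finite computer search, that at some bounded depth every leaf of this tree is empty — i.e., the prescribed partial quotients become inconsistent — which contradicts the existence of $\alpha$. The key improvement over Badziahin's $C \geq 8$ argument should come from subdividing on several partial quotients simultaneously (and keeping track of Hurwitz-style auxiliary state across doublings), rather than splitting only on $a_1(2^k\alpha)$; this shrinks the cylinders faster and lets the search terminate for the much larger bound $C = 14$. The main obstacle is the combinatorial blow-up: naive enumeration is hopeless, so the state space must be chosen just rich enough to propagate the obstructions efficiently while sharing and pruning branches aggressively. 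Once such a finite obstruction is certified, the conclusion $\sup_{k\geq 0} M(2^k\alpha) \geq 15$ follows immediately.
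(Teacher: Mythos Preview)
Your high-level plan --- build a tree of cylinder sets over prefixes of $\alpha$, propagate the constraints $a_n(2^k\alpha)\le 14$ down the tree, and declare victory once a computer search shows the tree empties --- is exactly the paper's strategy, and in both cases the theorem ultimately rests on a computer verification that the tree does empty for $C=14$.

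Where the two diverge is in the implementation of the pruning step. You propose to propagate constraints through the Hurwitz doubling rules of \Cref{sec:algo}, carrying an auxiliary carry/state across successive doublings. The paper does not touch Hurwitz for this theorem at all: for each prefix $w=a_1\cdots a_n$ it writes down explicit rational endpoints $\alpha_{\min},\alpha_{\max}$ of the cylinder $S_w$ (using that all later digits lie in $\{1,\dots,C\}$), computes the ordinary continued-fraction expansions of the rationals $2^k\alpha_{\min}$ and $2^k\alpha_{\max}$, and reads off the guaranteed initial digits of $2^k\alpha$ for every $\alpha\in S_w$ from the common prefix of those two expansions. This sidesteps any state-tracking across doublings; correctness reduces to the elementary fact that the common initial segment of the two endpoint expansions is shared by everything in between. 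Your route can in principle be made to work, but composing $k$ Hurwitz steps (each with its three-way case split and the clean-up of zeros) is noticeably more delicate bookkeeping, and you would still have to run the search to finish the proof.
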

\begin{proof}
We describe a simple algorithm that can (at least for small $C$) be used to show that there exists no irrational real number $\alpha$ with $\sup_{k\geq 0} M(2^k\alpha)\leq C$. Note that it suffices to consider $\alpha \in (0,1)$ with $1 \leq a_i(\alpha) \leq C$ for $i \geq 1$. 

For every block of digits $a_1 a_2 \cdots a_n$ with $1 \leq a_1, \ldots, a_n \leq C$, let us denote by
\[
	S_{a_1 a_2 \cdots a_n}
	:= \{ \alpha = [0;a_1, \ldots a_n, b_{n+1}, \ldots] \colon 1 \leq b_i \leq C \text{ for all } i \geq n+1 \}
\]
the set of all irrational reals $\alpha \in (0,1)$ with partial quotients bounded by $C$, and whose continued fraction expansion starts with $a_1 a_2 \cdots a_n$.
Our strategy is to exclude more and more prefixes $w$, i.e., to show for more and more $w$ that if $\alpha \in S_w$, then $M(2^k\alpha)> C$ for some $k$.

We start with $n=2$ and the set $A_2 := \{ a_1a_2 \colon 1 \leq a_1, a_2 \leq C \}$ of blocks of length $2$. Here and in general the set $A_n$ will contain all prefixes $w$ of length $n$ that we have not excluded yet.

At each step we set $A_{n+1} := \emptyset$ and then do the following for each $w = a_1a_2\cdots a_n \in A_n$:
\begin{enumerate}
    \item Compute a rational lower bound $\alpha_{\min}$ and a rational upper bound $\alpha_{\max}$ such that $\alpha_{\min} \leq \alpha \leq \alpha_{\max}$ holds for every  $\alpha \in S_w$.
    If $n$ is even, we can set 
    $\alpha_{\min} = [0;a_1,\ldots, a_n, C, 1, C, 1, C+1]$ and $\alpha_{\max} = [0;a_1,\ldots, a_n, 1, C, 1, C+1]$. If $n$ is odd, the same purpose is achieved by switching the two bounds.
    \item\label{it:k} Then for each $k = 1,2, \ldots$ (up to a stopping point) we do the following: Compute the continued fraction expansions of $2^k \alpha_{\min}$ and $2^k \alpha_{\max}$.
    \begin{enumerate}
        \item If the integer parts do not match, i.e., $a_0(2^k \alpha_{\min}) \neq a_0(2^k \alpha_{\max})$, then we have no general information on the continued fraction expansions of $2^k \alpha$ for $\alpha \in S_w$. We \textbf{stop} and do not exclude $w$.
        \item  Otherwise, assume that $a_i(2^k \alpha_{\min}) = a_i(2^k \alpha_{\max})$ for $0 \leq i \leq \ell$, and that $a_{\ell + 1}(2^k \alpha_{\min}) \neq a_{\ell + 1}(2^k \alpha_{\max})$.
        Then $a_i(2^k \alpha) = a_i (2^k \alpha_{\min}) = a_i (2^k \alpha_{\max})$ for $1 \leq i \leq \ell$, and $a_{\ell + 1}(2^k \alpha) \geq \min \{a_{\ell + 1} (2^k \alpha_{\min}), a_{\ell + 1} (2^k \alpha_{\max}) \}$ follow for all $\alpha \in S_w$.
        If one of these values exceeds $C$, we may exclude $w$ and \textbf{stop}. Otherwise, we go to the next $k$.
    \end{enumerate}
\end{enumerate}
Since the integer parts of $2^k \alpha_{\min}$ and $2^k \alpha_{\max}$ will eventually differ, the procedure (\ref{it:k}) stops after finitely many steps, and we have either excluded $w$ or not.
If we have not excluded it, we add all possible extensions of $w = a_1 a_2 \cdots a_n$ to the set $A_{n+1}$, i.e., we put
\[
	A_{n+1} := A_{n+1} \cup \{ a_1 a_2 \cdots a_n a_{n+1} \colon 1 \leq a_{n+1} \leq C \}.
\]
After doing this for all $w \in A_n$, we have produced the next set $A_{n+1}$.
If at some point we end up with an empty set, we must have excluded all $\alpha$, and the procedure terminates.

We have run the algorithm in Sagemath \cite{sagemath}, excluding all $\alpha$ for $C = 14$. This proves the theorem.
\end{proof}

\begin{rem}
Of course, we do not know whether the algorithm in the proof of Theorem~\ref{thm:M-bound} halts for all $C$. In fact, it terminates for a given $C$ if and only if there exists no $2$LC counterexample with constant $C$. This can be seen by an argument analogous to the proof of \Cref{thm:$2$LC-aequ} below.     
\end{rem}

Our result indeed improves Badziahin's bound \cite[Theorem 1.2]{Badziahin2016} from $1/9$ to $1/15$:

\begin{cor}\label{cor:Badziahin}
For every $\alpha \in \R$, there exists a positive integer $q$ such that
\[
	q \cdot \abs{q}_2 \cdot \abst{q\alpha} < 1/15.
\]
\end{cor}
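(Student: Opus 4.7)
The plan is to derive the corollary directly from Theorem \ref{thm:M-bound} by using the identification
\[
    \inf_{q \geq 1} q \cdot \abs{q}_2 \cdot \abst{q\alpha} = \inf_{k \geq 0} m(2^k \alpha)
\]
already established in Section 2, together with the right-hand inequality in \eqref{eq:M-m}. The strategy is to first dispose of the trivial case, and then to leverage the strict inequality $\abst{q_n \alpha} < 1/(a_{n+1}q_n)$ from \eqref{eq:cont_frac_est} to sharpen the weak bound $m(\alpha) \le 1/M(\alpha)$ into a strict one whenever $M(\alpha)$ is attained, which is exactly what is needed to replace ``$\le 1/15$'' by ``$<1/15$''.

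First, I would dispose of the rational case: if $\alpha = p/q'$ is rational, then $q' \abst{q'\alpha} = 0$, so the conclusion holds trivially with $q = q'$. Hence assume $\alpha$ is irrational. By Theorem \ref{thm:M-bound}, there exists $k_0 \ge 0$ with $M(2^{k_0}\alpha) \ge 15$. Set $\beta = 2^{k_0}\alpha$. If $M(\beta) = \infty$, then $m(\beta) = 0$ by \eqref{eq:M-m}, so $\inf_{q\geq 1} q\abst{q\beta} = 0$, and picking any $q$ small enough gives the desired bound. Otherwise $M(\beta) < \infty$ is the supremum of a set of positive integers, hence attained: $a_{n_0}(\beta) = M(\beta)$ for some $n_0 \geq 1$.

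For this $n_0$, the strict inequality in \eqref{eq:cont_frac_est} gives $q_{n_0-1}\abst{q_{n_0-1}\beta} < 1/a_{n_0}(\beta) = 1/M(\beta) \leq 1/15$. Consequently, writing $q = 2^{k_0} q_{n_0-1}$, we obtain
\[
    q \cdot \abs{q}_2 \cdot \abst{q\alpha}
    = q_{n_0-1} \cdot \abst{q_{n_0-1}\beta}
    < \frac{1}{15},
\]
where the first equality uses $\abs{2^{k_0}q_{n_0-1}}_2 = 2^{-k_0}/\abs{q_{n_0-1}}_2 \le 2^{-k_0}$ combined with the derivation from Section 2 (more cleanly: $q\abs{q}_2 = q_{n_0-1}\abs{q_{n_0-1}}_2$ if $q_{n_0-1}$ is odd, and in general one absorbs any factor of $2$ in $q_{n_0-1}$ into $k_0$ without changing the product $q\abst{q\alpha}$ by more than the obvious factor). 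This completes the argument.

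There is no real obstacle here; the only subtlety is ensuring that the inequality is strict, which is why I want to argue via the attained supremum rather than quoting $m(\alpha) \le 1/M(\alpha)$ as a black box. The rest is pure bookkeeping between the integer $q$ and its $2$-adic valuation.
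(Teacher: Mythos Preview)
Your approach is essentially the paper's: use Theorem~\ref{thm:M-bound} to locate $k$ and $n$ with $a_n(2^k\alpha)\geq 15$, apply the strict upper bound in \eqref{eq:cont_frac_est} to the $(n{-}1)$-st convergent of $2^k\alpha$, and set $q = 2^k q_{n-1}$. The paper does this in one pass without your case split on whether $M(\beta)$ is finite; that split is unnecessary, since $M(\beta)\geq 15$ with integer-valued $a_n$ already forces some $a_{n_0}(\beta)\geq 15$ directly.

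Your final bookkeeping, however, contains two slips. First, $\abs{2^{k_0}q_{n_0-1}}_2 = 2^{-k_0}\cdot\abs{q_{n_0-1}}_2$, not $2^{-k_0}/\abs{q_{n_0-1}}_2$. Second, the displayed line $q\cdot\abs{q}_2\cdot\abst{q\alpha} = q_{n_0-1}\cdot\abst{q_{n_0-1}\beta}$ is in general only an inequality $\leq$; the actual identity is $q\cdot\abs{q}_2 = q_{n_0-1}\cdot\abs{q_{n_0-1}}_2 \leq q_{n_0-1}$, with equality only when $q_{n_0-1}$ is odd. The paper sidesteps this by simply using $\abs{q}_2 \leq 2^{-k}$ and running the whole chain as inequalities, which is cleaner than your parenthetical about ``absorbing factors of $2$''. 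None of this is fatal---the inequality goes the right way---but the write-up should be tightened.
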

\begin{proof}
For rational $\alpha$ the statement holds trivially.
Let $\alpha$ be irrational. Then by \Cref{thm:M-bound} there exist $k \geq 0$ and $n\geq 1$ such that $a_n^{(k)} := a_n(2^k\alpha)\geq 15$.
Let $q_{n-1}^{(k)}$ be the denominator of the $(n-1)$-st convergent to $2^k \alpha$. Then by \eqref{eq:cont_frac_est}, we have
\[
    \abst{q_{n-1}^{(k)} 2^k \alpha}
    < \frac{1}{q_{n-1}^{(k)} \cdot a_i^{(k)}}
    \leq \frac{1}{q_{n-1}^{(k)} \cdot 15}.
\]
Now set $q = 2^k q_{n-1}^{(k)}$ and note that $\abs{q}_2 \leq 2^{-k}$. 
The above inequality becomes
\[
    \abst{q \alpha} 
    < \frac{1}{q/2^k \cdot 15},
\]
and we obtain
\begin{align*}
    q \cdot \abs{q}_2 \cdot \abst{q \alpha}
    < q \cdot 2^{-k} \cdot \frac{1}{q/2^k \cdot 15} 
    = {1}/{15}.
\end{align*}
\end{proof}

\begin{table}[H]
\center
\resizebox{\textwidth}{!}{\begin{tabular}{c c c c c c c c c c c c c c c}
\hline
	$C$ & 1  & 2  & 3  & 4  & 5  & 6  & 7  & 8  & 9  & 10  & 11  & 12  & 13  & 14\\
\hline
	time & 0.0004  & 0.0007  & 0.0022  & 0.0093  & 0.0368  & 0.0956  & 0.17022  & 1.2867  & 7.8473  & 26.835  & 99.7163  & 445.7725  & 2033.3207  & 7796.3119\\
\hline
	$K$ & 1  & 2  & 4  & 6  & 9  & 16  & 16  & 28  & 37  & 37  & 41  & 47  & 56  & 59\\
\hline
\end{tabular}}
\caption{Computations for the lower bound for $\sup_{k\geq 0} M(2^k \alpha)$; time in seconds.}
\label{tab:M-bound_time}
\end{table}

\begin{rem}\label{rem:M-K}
The computations for the proof of \Cref{thm:M-bound} were done in Sagemath \cite{sagemath} on a MacBook Air with an $8$-core Apple M$3$ processor and $16$GB of RAM. The code is availble at \url{https://github.com/dinisvit/computer_aided_2LC.git}.
The computation time for the case $C=14$ was about two hours. The detailed times for each $C$ are presented in \Cref{tab:M-bound_time}. We additionally keep track of the
largest $k$'s that are used to exclude certain $\alpha$'s. Denoting the largest $k$ used for each $1 \leq C \leq 14$ by $K_C$, we have in fact proved that for every $\alpha$ there exists a $0\leq k \leq K_C$ such that $M(2^k \alpha) > C$.  
\end{rem}

In view of the above remark, it is natural to ask whether some type of converse statement can be made. Indeed, we have the next theorem.

\begin{thm}\label{thm:$2$LC-aequ}
Let $C$ be a constant.
The following two statements are equivalent:
\begin{enumerate}[label = \roman*)]

\item\label{it:$2$LC-ce} There exists a counterexample to $2$LC with constant $C$, i.e., there exists an $\alpha$ with $M(2^k \alpha) \leq C$ for all $k\geq 0$.

\item\label{it:$2$LC-K} For every $K \geq 0$ there exists an irrational $\alpha_K$ such that $M(2^k \alpha_K) \leq C$ for all $0 \leq k \leq K$. 
 
\end{enumerate}
\end{thm}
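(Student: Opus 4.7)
The direction \ref{it:$2$LC-ce}$\Rightarrow$\ref{it:$2$LC-K} is trivial (just take $\alpha_K = \alpha$ for every $K$), so all the content lies in the converse. My plan for \ref{it:$2$LC-K}$\Rightarrow$\ref{it:$2$LC-ce} is a compactness argument inside the Cantor-like set
\[
	E_C := \{ \alpha \in [0,1] \colon a_n(\alpha) \leq C \text{ for all } n \geq 1\},
\]
consisting of the irrationals in $[0,1]$ whose partial quotients are uniformly bounded by $C$. The first thing I would establish is that $E_C$ is a \emph{compact} subset of $\R$ consisting entirely of irrationals. This follows by realizing $E_C$ as the image of the compact product space $\{1,\ldots,C\}^{\N}$ under the continuous map $(a_i)_{i\geq 1} \mapsto [0;a_1,a_2,\ldots]$; continuity reduces to the standard estimate that two infinite continued fractions sharing their first $N$ partial quotients differ by at most $1/(q_N q_{N+1})$, and every infinite simple continued fraction represents an irrational.

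Assuming \ref{it:$2$LC-K}, I note that since the partial quotients $a_1, a_2, \ldots$ of any $x \in \R$ depend only on its fractional part, we have $M(x) = M(\{x\})$, and more generally $M(2^k x) = M(\{2^k x\})$ for every $k \geq 0$. Thus I may replace each $\alpha_K$ by $\{\alpha_K\}$ without altering any of the relevant quantities, placing the whole sequence inside $E_C$. Compactness then yields a subsequence $\alpha_{K_j} \to \alpha$ for some $\alpha \in E_C$; in particular $\alpha$ is automatically irrational and satisfies $M(\alpha) \leq C$.

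To upgrade this to $M(2^k \alpha) \leq C$ for every $k \geq 0$, I fix $k$ and observe that for every $j$ with $K_j \geq k$ we have $\{2^k \alpha_{K_j}\} \in E_C$. Because $\alpha$ is irrational, so is $2^k \alpha$, and hence $x \mapsto \{x\}$ is continuous at $2^k \alpha$; therefore $\{2^k \alpha_{K_j}\} \to \{2^k \alpha\}$. Closedness of $E_C$ then forces the limit into $E_C$, giving $M(2^k \alpha) = M(\{2^k \alpha\}) \leq C$, which finishes the proof.

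The only nontrivial ingredient is the compactness of $E_C$; once that is in place, everything else is essentially bookkeeping. The one subtlety worth flagging is the need to pass to fractional parts \emph{before} extracting the limit (so that the sequence lives in a bounded set), together with the observation that continuity of $x \mapsto \{x\}$ at the limit point is guaranteed precisely by the irrationality of $\alpha$ --- a fact that is itself automatic from $\alpha \in E_C$.
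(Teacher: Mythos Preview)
Your argument is correct and is in fact cleaner than the paper's own proof. Both proofs are compactness arguments, but the execution differs. The paper passes to fractional parts, extracts an accumulation point $\alpha$, argues its irrationality by an ad-hoc observation about the shape of good rational approximants, and then---to show $a_n(2^k\alpha)\le C$---invokes the forward-referenced \Cref{lem:noofdigitsproduced} to match the first $n$ partial quotients of $2^k\alpha$ with those of $2^k\alpha_K$ once $\alpha_K$ and $\alpha$ share $n\cdot 9^k$ initial partial quotients. You bypass all of this by working inside the compact set $E_C$: irrationality of the limit is automatic (every point of $E_C$ is irrational), and the bound on $M(2^k\alpha)$ follows from closedness of $E_C$ together with continuity of $x\mapsto\{2^k x\}$ at irrational points, with no need for any digit-counting lemma. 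Your route is more conceptual and immediately generalizes (e.g., to $p$LC for any $p$, or indeed to any family of continuous maps in place of $x\mapsto 2^k x$); the paper's route is more hands-on and ties into the algorithmic machinery developed later, but at the cost of a forward reference.
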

\begin{proof}
It is clear that \ref{it:$2$LC-ce} implies \ref{it:$2$LC-K}. Now assume that \ref{it:$2$LC-K} holds.
Denote by $\fract{x} = x - \floor{x}$ the fractional part of $x$.
Since the sequence $(\fract{\alpha_K})_{K\geq 0}$ is bounded, it has some accumulation point $\alpha$. Assume for a moment that $\alpha = [a_0;a_1,\ldots,a_n]$ is rational. Then every number that approximates $\alpha$ well, has to be either of the shape $[a_0;a_1, \ldots, a_n, A, b_{n+2} \ldots]$ or $[a_0;a_1, \ldots a_n-1, 1, A, b_{n+2}, \ldots]$ with $A$ very large. Since $M(\alpha_K)\leq C$ for all $K$, there cannot exist a subsequence of $(\fract{\alpha_K})_{K \geq 0}$ converging to $\alpha$. Therefore, $\alpha$ has to be irrational. 
Now we argue that $a_n(2^k \alpha) \leq C$ for all $k\geq 0$ and all $n \geq 1$.
Fix some $n\geq 1$ and $k \geq 0$. Since $\alpha$ is an accumulation point of $(\fract{\alpha_K})_{K\geq 0}$, there exists a $K \geq n\cdot 9^k$ such that $\alpha_K$ and $\alpha$ agree on the first $n\cdot 9^k$ digits. 
As we will see in \Cref{lem:noofdigitsproduced},
when computing the continued fraction expansion of $2x$ from $x$, the first $3 \ell+1$ partial quotients of $x$ fully determine the first $\ell$ partial quotients of $2x$. Therefore, $\alpha_K$ and $\alpha$ agreeing on the first $n\cdot 9^k$ digits implies that $2^k \alpha_K$ and $2^k \alpha$ agree on the first $n$ digits. In particular $a_n(2^k \alpha) = a_n(2^k \alpha_K) \leq C$.
\end{proof}

The equivalence in \Cref{thm:$2$LC-aequ} is not true if we replace $M$ by $B$. This will become apparent from \Cref{thm:B-counterex-m} (\Cref{sec:2equiv}) and \Cref{thm:B-bound} (\Cref{sec:B-bound}).

Therefore, in order to prove a $B$-version of \Cref{thm:M-bound}, we need a different idea. Our approach relies on a good understanding of the algorithm for multiplication by $2$ which we present in \Cref{sec:algo}. Before doing so, let us recall some more basic facts about continued fractions.

\section{Some basic facts about continued fractions}\label{sec:cf}

We briefly summarize all facts that we will use later. For a reference see, for example, \cite{RockettSzusz1992}.

Let $\alpha = [a_0; a_1, a_2, \ldots]$. As mentioned before, the convergents $p_n/q_n = [a_0; a_1, \ldots, a_n]$ are best approximations to $\alpha$ in the sense that if $q_n \leq q < q_{n+1}$ then $\abst{q\alpha} \geq \abst{q_n \alpha} = \abs{q_n \alpha - p_n}$, and they satisfy the inequalities
\[
	\frac{1}{q_{n-1}^2 (a_n+2)} 
	< \left| \alpha - \frac{p_{n-1}}{q_{n-1}} \right|
	< \frac{1}{q_{n-1}^2 a_n}.
\]
Moreover, Legendre's theorem says that they are in fact ``the only really good approximations'': If 
\[
    \left| \alpha - \frac{p}{q} \right|
    < \frac{1}{2q^2},
\]
then $p/q = p_n/q_n$ for some $n$.

The convergents follow the formula $q_0 = 1$, $q_1 = a_1$, and $q_{n+1} = a_{n+1} q_n + q_{n-1}$ for $n \geq 1$.
In particular, this implies that two consecutive denominators $q_n, q_{n+1}$ cannot both be even.

If the continued fraction expansion of $\alpha$ is periodic, i.e., if $a_{n+1}, a_{n+2}, \ldots, a_{n+m}$ keep repeating periodically from there onward, we write 
$\alpha = [0;a_1,\ldots,a_n, \overline{a_{n+1}, \ldots, a_{n+m}}]$. 
The continued fraction expansion of $\alpha$ is periodic if and only if $\alpha$ is a quadratic irrational.
For quadratic irrational $\alpha$ we denote its conjugate by $\overline{\alpha}$.

We say that a continued fraction is \textit{purely periodic} if it is of the shape $[\overline{a_0; a_1, \ldots, a_n}]$. The following characterization is well known:

\begin{lem}\label{lem:purely_periodic}
The continued fraction expansion of a quadratic irrational $\alpha$ is purely periodic if and only if $\alpha >1$ and $-1<\overline{\alpha}<0$.
\end{lem}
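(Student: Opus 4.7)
The plan is to prove both directions separately, using the standard matrix/recurrence description of convergents for one direction and an inductive argument combined with eventual periodicity for the other.

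For the forward direction, suppose $\alpha = [\overline{a_0; a_1, \ldots, a_{n-1}}]$ is purely periodic. Since periodicity forces $a_0$ to appear as a later partial quotient as well, we have $a_0 \geq 1$, hence $\alpha > a_0 \geq 1$. For the conjugate, I would use the identity
\[
    \alpha = \frac{p_{n-1}\alpha + p_{n-2}}{q_{n-1}\alpha + q_{n-2}},
\]
which holds because the purely periodic tail is again $\alpha$. Clearing denominators gives a quadratic $q_{n-1}\alpha^2 + (q_{n-2} - p_{n-1})\alpha - p_{n-2} = 0$ satisfied by both $\alpha$ and $\overline{\alpha}$. Then I would evaluate the polynomial $f(x) = q_{n-1}x^2 + (q_{n-2} - p_{n-1})x - p_{n-2}$ at $x=0$ and $x=-1$: at $x=0$ we get $-p_{n-2} < 0$, and at $x=-1$ we get $q_{n-1} - q_{n-2} + p_{n-1} - p_{n-2} > 0$ (using that $p_n,q_n$ are strictly increasing for $n\geq 1$). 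Since the other root $\alpha$ is positive, $f$ has a sign change on $(-1,0)$, and the conjugate $\overline{\alpha}$ must lie there.

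For the backward direction, suppose $\alpha > 1$ and $-1 < \overline{\alpha} < 0$. Let $\alpha_n = [a_n; a_{n+1}, \ldots]$ be the complete quotients, with $\alpha_0 = \alpha$. I would first prove by induction that $-1 < \overline{\alpha_n} < 0$ for all $n \geq 0$. The inductive step uses $\alpha_n = a_n + 1/\alpha_{n+1}$ conjugated to $\overline{\alpha_{n+1}} = 1/(\overline{\alpha_n} - a_n)$; since $a_n \geq 1$ and $\overline{\alpha_n} \in (-1,0)$, we have $\overline{\alpha_n} - a_n < -1$, so $\overline{\alpha_{n+1}} \in (-1, 0)$. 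Since $\alpha$ is a quadratic irrational, its continued fraction expansion is eventually periodic, so there exist $m < n$ with $\alpha_m = \alpha_n$. I now reverse direction: from $\alpha_{n+1} = 1/(\alpha_n - a_n)$ together with the conjugate bound $-1<\overline{\alpha_{n+1}}<0$, one recovers $a_n = \lfloor -1/\overline{\alpha_{n+1}}\rfloor$, and then $\alpha_n = a_n + 1/\alpha_{n+1}$. So $\alpha_{k+1}$ uniquely determines $\alpha_k$. Applying this to $\alpha_m = \alpha_n$ repeatedly yields $\alpha_{m-1} = \alpha_{n-1}, \ldots, \alpha_0 = \alpha_{n-m}$, i.e.\ pure periodicity with period $n-m$.

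The only slightly delicate step is the ``reversibility'' in the backward direction: one must be careful that the formula $a_n = \lfloor -1/\overline{\alpha_{n+1}}\rfloor$ genuinely recovers the correct integer, which requires the inductively established bounds $\overline{\alpha_k} \in (-1,0)$ together with $a_k \geq 1$. Once this injectivity of the ``predecessor map'' is in hand, pure periodicity follows immediately from eventual periodicity, and no further combinatorial work is needed.
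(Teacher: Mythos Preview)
The paper does not prove this lemma; it simply states it as a well-known characterization (it is Galois's classical theorem on purely periodic continued fractions). Your argument is the standard proof and is correct in both directions: the sign check of $f$ at $0$ and $-1$ for the forward implication, and the ``predecessor map'' $a_n = \lfloor -1/\overline{\alpha_{n+1}}\rfloor$ for the backward one, are exactly the usual ingredients. One cosmetic point: the claim that $q_n$ is strictly increasing for $n\geq 1$ can fail at the very first step when $a_1=1$ (then $q_1=q_0=1$), but $f(-1)=q_{n-1}-q_{n-2}+p_{n-1}-p_{n-2}$ is still positive in every edge case (for period length $1$ one gets $a_0>0$, and for period length $2$ one gets $a_0(a_1-1)+a_1\geq 1$), so the argument goes through unchanged.
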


Recall that an \textit{algebraic integer} is an algebraic number whose minimal polynomial over the integers is monic. Irrationals which are simultaneously quadratic irrationalities and algebraic integers have a special pattern in their continued fraction expansion. The next lemma also seems to be well known; see for example \cite{VanDerPoorten2004}.

\begin{lem}\label{lem:cf_palindrome}
If $\alpha$ is both a quadratic irrational and an algebraic integer, then its continued fraction expansion is of the shape
\[
    \alpha = [a_0; \overline{a_1, \ldots, a_n} ],
\]
where $a_1 \cdots a_{n-1}$ is a palindrome and $a_n = 2a_0 - (\alpha + \overline{\alpha})$.
\end{lem}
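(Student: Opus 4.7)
The strategy is to reduce to \Cref{lem:purely_periodic} and then apply Galois's classical reversal theorem for purely periodic continued fractions. Write $t := \alpha + \overline{\alpha}$ and $N := \alpha \overline{\alpha}$, both integers since $\alpha$ is an algebraic integer, and take $\alpha$ to be the larger root of $x^2 - tx + N$ (implicit in the claimed form; for the smaller root, e.g.\ $\alpha = 5 - \sqrt{2}$, the formula $a_n = 2a_0 - (\alpha + \overline{\alpha})$ yields $a_n = -4$). Define $\gamma := 1/(\alpha - a_0) > 1$, with conjugate $\overline{\gamma} = 1/(\overline{\alpha} - a_0)$.

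The first step, and main technical point, is to verify $-1 < \overline{\gamma} < 0$, i.e.\ $\overline{\alpha} < a_0 - 1$, by a short case analysis on the parity of $t$. If $t = 2s$ is even, then $\alpha = s + \sqrt{M}$ for a non-square integer $M = s^2 - N \geq 2$, $a_0 = s + \lfloor\sqrt{M}\rfloor$, and $(a_0 - 1) - \overline{\alpha} = \lfloor\sqrt{M}\rfloor + \sqrt{M} - 1 > 0$. If $t$ is odd, set $\ell := 2a_0 - t$ and $D := t^2 - 4N$; the bounds $a_0 \leq \alpha < a_0 + 1$ translate to $\ell < \sqrt{D} < \ell + 2$, which forces $\ell$ to be a positive odd integer and gives $(a_0 - 1) - \overline{\alpha} = (\sqrt{D} + \ell - 2)/2 > 0$. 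By \Cref{lem:purely_periodic}, $\gamma = [\overline{a_1; a_2, \ldots, a_n}]$, hence $\alpha = [a_0; \overline{a_1, \ldots, a_n}]$.

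Next I would invoke Galois's reversal theorem: for purely periodic $\gamma = [\overline{a_1; a_2, \ldots, a_n}]$, one has $-1/\overline{\gamma} = [\overline{a_n; a_{n-1}, \ldots, a_1}]$. (Standard derivation: $\gamma$ and $\overline{\gamma}$ satisfy $q_n x^2 + (q_{n-1} - p_n)x - p_{n-1} = 0$, where $p_i/q_i$ are convergents of the period, and substituting $x \mapsto -1/x$ yields exactly the quadratic for the reversed period, by transposition of products of continued fraction matrices.) A direct computation gives $-1/\overline{\gamma} = a_0 - \overline{\alpha}$, and Vieta's relation $\overline{\alpha} = t - \alpha$ rewrites this as $\alpha + (a_0 - t)$; since $a_0 - t \in \Z$, shifting the integer part of $\alpha$'s expansion produces a second form $a_0 - \overline{\alpha} = [2a_0 - t;\ \overline{a_1, a_2, \ldots, a_n}]$.

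Uniqueness of the continued fraction expansion then forces the two expressions for $a_0 - \overline{\alpha}$ to match. Comparing integer parts gives $a_n = 2a_0 - t = 2a_0 - (\alpha + \overline{\alpha})$, a legal positive partial quotient since $a_n = \ell \geq 1$; comparing the periodic tails term-by-term as infinite sequences gives $a_i = a_{n-i}$ for $1 \leq i \leq n - 1$, i.e.\ $a_1 a_2 \cdots a_{n-1}$ is a palindrome. The main obstacle is the first step: establishing $\overline{\alpha} < a_0 - 1$ genuinely uses integrality of $t$ (not merely of $N$), which is where the algebraic-integer hypothesis enters the argument; once this bridge to \Cref{lem:purely_periodic} is in place, the rest is routine symbolic manipulation via Galois's theorem and cf uniqueness.
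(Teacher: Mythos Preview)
The paper does not actually supply a proof of this lemma; it is stated with a citation to \cite{VanDerPoorten2004} and left at that. So there is nothing to compare against, and the question is simply whether your argument is sound.

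It is. Your route via \Cref{lem:purely_periodic} applied to $\gamma = 1/(\alpha - a_0)$, followed by Galois's reversal theorem $-1/\overline{\gamma} = [\overline{a_n;a_{n-1},\ldots,a_1}]$ and the identity $-1/\overline{\gamma} = a_0 - \overline{\alpha} = \alpha + (a_0 - t)$, is the standard proof of this fact and is carried out correctly. The case analysis on the parity of $t$ to establish $\overline{\alpha} < a_0 - 1$ is clean: in the even case $M \geq 2$ gives $\lfloor\sqrt{M}\rfloor + \sqrt{M} - 1 > 0$ immediately, and in the odd case the congruence $D \equiv 1 \pmod 4$ together with $\ell < \sqrt{D} < \ell + 2$ forces $\ell$ odd and $\geq 1$, whence $\sqrt{D} > \ell \geq 2 - \ell$. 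Your observation that the lemma as stated tacitly requires $\alpha$ to be the larger root is also correct and worth flagging; the paper only ever invokes the lemma in that situation (\Cref{lem:alpham_shape}).
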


The statement of the next Lemma is mentioned in \cite[p.~84]{Perron1913} and \cite[p.~44]{RockettSzusz1992}. Since we were unable to find it presented as a self-standing lemma, we have included a direct proof.

\begin{lem}\label{lem:Perron}
Let $\alpha$ be a positive quadratic irrational with purely periodic continued fraction expansion $\alpha = [\overline{a_0; a_1, \ldots, a_n}]$ and 
\[
    \alpha_i = [\overline{a_i;a_{i+1}, \ldots, a_n, a_0, \ldots a_{i-1}}]
\]
be its complete quotients for $0 \leq i \leq n$.
Then there exists a $D$ such that each $\alpha_i$ can be represented as
\[
    \alpha_i = \frac{R_i + \sqrt{D}}{S_i}
\]
with integers
\[
    1 \leq R_i \leq \floor{\sqrt{D}},
    \quad
    1 \leq S_i \leq R_i + \floor{\sqrt{D}} \leq 2 \floor{\sqrt{D}},\quad \text{and} \quad D - R_{i}^2 = S_i S_{i-1}
\]
for $0 \leq i \leq n$, where $S_{-1} = (D - R_0^2)/S_0$ is an integer.
\end{lem}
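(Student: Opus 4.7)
The plan is to set up the canonical representation $\alpha_i = (R_i + \sqrt{D})/S_i$ for a single integer $D$, verify that the continued-fraction step preserves this form via an explicit recurrence on $(R_i, S_i)$, and then extract all the inequalities from the fact that every complete quotient $\alpha_i$ is itself purely periodic. For the initial data, I would start from an irreducible polynomial $A_0 x^2 + B_0 x + C_0 = 0$ with $A_0 > 0$ satisfied by $\alpha$, and set $D := B_0^2 - 4 A_0 C_0$, $R_0 := -B_0$, $S_0 := 2 A_0 > 0$, so that $\alpha_0 = (R_0 + \sqrt{D})/S_0$. The identity $D - R_0^2 = -4 A_0 C_0 = S_0 \cdot (-2 C_0)$ then yields $S_{-1} := (D - R_0^2)/S_0 = -2 C_0 \in \Z$, which both confirms the integer claim about $S_{-1}$ and seeds the divisibility chain.

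Next I would define $R_{i+1} := a_i S_i - R_i$ and $S_{i+1} := (D - R_{i+1}^2)/S_i$ for $i \geq 0$. A routine rationalization of $\alpha_{i+1} = 1/(\alpha_i - a_i)$ confirms that $\alpha_{i+1} = (R_{i+1} + \sqrt{D})/S_{i+1}$. Integrality of $S_{i+1}$ is handled inductively: using $D - R_i^2 = S_i S_{i-1}$, one computes
\[
    D - R_{i+1}^2 = (D - R_i^2) + a_i S_i (2 R_i - a_i S_i) = S_i (S_{i-1} + 2 a_i R_i - a_i^2 S_i),
\]
so $S_{i+1} = S_{i-1} + 2 a_i R_i - a_i^2 S_i \in \Z$ and the identity $D - R_{i+1}^2 = S_i S_{i+1}$ propagates.

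For the bounds, each cyclic shift $\alpha_i = [\overline{a_i; \ldots, a_{i-1}}]$ is itself purely periodic, so \Cref{lem:purely_periodic} gives $\alpha_i > 1$ and $-1 < \overline{\alpha_i} < 0$. The product $\alpha_i \overline{\alpha_i} = (R_i^2 - D)/S_i^2$ is then negative, forcing $D - R_i^2 > 0$; combined with $S_0 > 0$ and $S_i S_{i-1} = D - R_i^2$, induction yields $S_i \geq 1$ for all $i$. With $S_i > 0$ established, $\overline{\alpha_i} < 0$ rewrites as $R_i < \sqrt{D}$ (so $R_i \leq \floor{\sqrt{D}}$), while $\alpha_i > 1$ and $\overline{\alpha_i} > -1$ become $R_i > S_i - \sqrt{D}$ and $R_i > \sqrt{D} - S_i$, which together give $R_i > |S_i - \sqrt{D}| > 0$ (strict because $\sqrt{D}$ is irrational), hence $R_i \geq 1$. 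Finally $\alpha_i > 1$ rearranges to $S_i < R_i + \sqrt{D}$, upgrading to $S_i \leq R_i + \floor{\sqrt{D}} \leq 2 \floor{\sqrt{D}}$.

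The main obstacle I anticipate is the apparent circularity in the last step: translating the inequalities from \Cref{lem:purely_periodic} into bounds on $R_i$ and $S_i$ seems to require $S_i > 0$, yet establishing $S_i > 0$ appears to depend on those same bounds. The clean way around this is to first extract $D - R_i^2 > 0$ from the sign of the product $\alpha_i \overline{\alpha_i}$, which is insensitive to the sign of $S_i$, and only then propagate the positivity of $S_i$ by induction from $S_0 > 0$.
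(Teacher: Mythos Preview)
Your proposal is correct and follows essentially the same route as the paper: start from the quadratic formula to set $D = B_0^2 - 4A_0C_0$, $R_0 = -B_0$, $S_0 = 2A_0$, propagate via the recurrence $R_{i+1} = a_i S_i - R_i$, $S_{i+1} = (D - R_{i+1}^2)/S_i$, and read off the bounds from \Cref{lem:purely_periodic} applied to each purely periodic $\alpha_i$. The one place you are more careful than the paper is in securing $S_i > 0$ for $i \geq 1$: the paper simply asserts that ``the estimates for $R_{i+1}, S_{i+1}$ hold by the same arguments as for $R_0, S_0$'', implicitly reusing the argument that started from the known positivity $S_0 = 2A_0 > 0$, whereas you first extract $D - R_i^2 > 0$ from the sign of $\alpha_i \overline{\alpha_i}$ (which is insensitive to the sign of $S_i$) and then propagate $S_i > 0$ inductively via $S_i S_{i-1} = D - R_i^2$. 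This is a cleaner way to close what would otherwise be a small gap, but it does not change the overall architecture of the argument.
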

\begin{proof}
Assume that $\alpha$ is the larger of the two roots of some irreducible polynomial $Ax^2 + Bx + C$ with integers $A \geq 1,B,C$.
For $i=0$ we can therefore write
\[
	\alpha_0 
	= \alpha 
	= \frac{-B + \sqrt{B^2 - 4AC}}{2A}
	= \frac{R_0 + \sqrt{D}}{S_0},
\]
where we set $R_0 = -B$, $B^2 - 4AC = D$ and $S_0 = 2A >0$. 
For $\alpha$ purely periodic, we know from \Cref{lem:purely_periodic} that $\alpha>1$ and $-1 < \overline{\alpha} < 0$.
From $0 < \alpha + \overline{\alpha} = 2R_0/S_0$ we get $R_0 \geq 1$.
From $\overline{\alpha} = (R_0 - \sqrt{D})/S_0 < 0$ we get that $R_0 \leq \floor{\sqrt{D}}$. Finally, $\alpha = (R_0 + \sqrt{D})/S_0 >1$ yields $S_0 \leq R_0 + \floor{\sqrt{D}} \leq 2 \floor{\sqrt{D}}$.
Moreover, note that $D - R_0^2 = B^2 - 4AC - B^2 = - 4AC = 2A \cdot (-2C) = S_0 S_{-1}$. Hence, $S_{-1} = -2C$ is indeed an integer.

Now assume inductively that the statements hold for some $i\geq 0$.
We have
\begin{align*}
	\alpha_{i+1} 
	&= \frac{1}{\alpha_i - a_i}
	= \frac{1}{\frac{R_i + \sqrt{D}}{S_i} - a_i}
	= \frac{S_i}{\sqrt{D} - (a_i S_i - R_i)}
	= \frac{S_i(a_i S_i - R_i + \sqrt{D})}{D - (a_i S_i - R_i)^2}.
\end{align*}
Since by induction hypothesis $S_i$ divides $D - R_i^2$, it also divides $D - (a_i S_i - R_i)^2$, and we can set
\begin{align*}
	R_{i+1} = a_i S_i - R_i
	\quad \text{and} \quad
	S_{i+1} = \frac{D - (a_i S_i - R_i)^2}{S_i}.
\end{align*}
We also get $D - R_{i+1}^2 = D - (a_i S_i - R_i)^2 = S_{i+1} S_i$. 
Since $\alpha_{i+1}$ is purely periodic, the estimates for $R_{i+1}, S_{i+1}$ hold by the same arguments as for $R_0, S_0$.
\end{proof}

Finally, let us recall the concept of equivalence. Two real irrationals $\alpha, \beta$ are said to be \textit{equivalent} (write $\alpha \sim \beta$) if there exist integers $a,b,c,d$ with $ad - bc = \pm 1$ such that
\[
    \beta = \frac{a\alpha + b}{c\alpha + d}.
\]
It turns out that $\alpha \sim \beta$ if and only if their continued fraction expansions have the same tail, that is, if $\alpha = [a_0;a_1, \ldots, a_n, c_1, c_2, \ldots]$ and $\beta = [b_0;b_1,\ldots, b_m, c_1,c_2, \ldots]$.

We will later be interested in determining whether $2\alpha$ can be equivalent to $\alpha$.
In this context, the next lemma will be useful.

\begin{lem}\label{lem:equiv_mipo}
Let $m\neq 0$ be an integer and $\alpha$ be a real irrational. If $\alpha \sim m\alpha$, then $\alpha$ must be a quadratic irrational.
Moreover, if $Ax^2 + Bx + C$ is the minimal polynomial of $\alpha$ (i.e., $A>0$ and $\gcd(A,B,C) = 1$), then $\alpha \sim m\alpha$ if and only if 
there exist integers $a,b,c,d, \ell$ with $\gcd(\ell,m)=1$ and
\begin{align}
    A\ell &= mc, \label{eq:A}\\
    B\ell &= md - a,\label{eq:B}\\
    C\ell &= - b, \label{eq:C}\\
    ad - bc &= \pm 1. \label{eq:abcd}
\end{align}
\end{lem}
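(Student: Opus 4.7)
The plan is to exploit the equation that equivalence gives and match its coefficients against the minimal polynomial of $\alpha$.

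For the first assertion, I would start from $\alpha \sim m\alpha$, write $m\alpha = (a\alpha+b)/(c\alpha+d)$ with $ad-bc=\pm 1$, and clear denominators to obtain
\[
    mc\alpha^2 + (md-a)\alpha - b = 0.
\]
When $c \neq 0$ this is a non-degenerate integer quadratic vanishing at the irrational $\alpha$, so $\alpha$ is a quadratic irrational. If on the other hand $c = 0$, irrationality of $\alpha$ forces $md = a$ and $b = 0$, whence $ad - bc = md^2 = \pm 1$, which is only possible for $m = \pm 1$. In particular, for the values of $m$ relevant to the paper (notably $m = 2$), the first claim follows.

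For the equivalence of the two formulations, I would assume $\alpha$ has primitive minimal polynomial $Ax^2 + Bx + C$. Uniqueness of the minimal polynomial (up to a scalar) applied to the displayed quadratic yields a rational $\ell$ with $A\ell = mc$, $B\ell = md-a$, $C\ell = -b$. Writing $\ell$ in lowest terms, its denominator must divide each of $A, B, C$, hence divides $\gcd(A,B,C)=1$, so $\ell$ is automatically an integer. To see $\gcd(\ell,m)=1$, suppose a prime $p$ divides both: then \eqref{eq:B} forces $p \mid a$, \eqref{eq:C} forces $p \mid b$, and therefore $p \mid ad - bc = \pm 1$, a contradiction. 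This produces the required five integers.

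For the converse, I would multiply $A\alpha^2 + B\alpha + C = 0$ by $\ell$ and substitute \eqref{eq:A}--\eqref{eq:C} to rewrite the relation as $m\alpha(c\alpha+d) = a\alpha+b$. The denominator $c\alpha+d$ is nonzero, since otherwise $c = d = 0$ by irrationality of $\alpha$, contradicting $ad - bc = \pm 1$. Hence $m\alpha = (a\alpha+b)/(c\alpha+d)$, and together with \eqref{eq:abcd} this gives $\alpha \sim m\alpha$. The whole argument is essentially a direct calculation; the only step needing real care is establishing the integrality of $\ell$ and the coprimality $\gcd(\ell,m)=1$, both of which follow by short number-theoretic arguments from the primitivity of $Ax^2+Bx+C$ and the determinant condition $ad-bc=\pm 1$.
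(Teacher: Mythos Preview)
Your argument is correct and follows the same route as the paper: derive the quadratic relation $mc\alpha^2 + (md-a)\alpha - b = 0$ from the M\"obius transformation, match coefficients against the primitive minimal polynomial to extract $\ell$, and deduce $\gcd(\ell,m)=1$ from the determinant condition via \eqref{eq:B} and \eqref{eq:C}. You are in fact slightly more careful than the paper in two places: you justify the integrality of $\ell$ explicitly from primitivity, and you handle the degenerate case $c=0$, correctly noting that it forces $m=\pm 1$ --- values for which the first assertion of the lemma is actually false as stated (every irrational satisfies $\alpha\sim\pm\alpha$), a point the paper's proof glosses over.
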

\begin{proof}
Assume that $\alpha \sim m\alpha$. Then by definition there exist integers
$a,b,c,d$ with $ad - bc = \pm 1$ and
\begin{equation}\label{eq:k-equiv}
    m \alpha  = \frac{a\alpha + b}{c\alpha + d}.
\end{equation}
This transforms into
\begin{equation}\label{eq:k-mipo}
    mc\alpha^2 + (md - a) \alpha - b = 0,
\end{equation}
i.e., $\alpha$ is a root of the polynomial $mcX^2 + (md - a) X - b$. Since $\alpha$ is irrational, it must therefore be a quadratic irrational.  Moreover, the polynomial has to be a multiple of its minimal polynomial, which means that equations \eqref{eq:A}--\eqref{eq:C} must be satisfied for some integer $\ell \neq 0$.
Further, we must have $\gcd(\ell,m)=1$ because otherwise we see from \eqref{eq:B} and \eqref{eq:C} that $\gcd(\ell,m)$ is a common divisor of $a$ and $b$, contradicting \eqref{eq:abcd}.

Conversely, if $\alpha$ has minimal polynomial $Ax^2 + Bx + C$ and equations \eqref{eq:A}--\eqref{eq:abcd} are satisfied by some integers, then clearly \eqref{eq:k-mipo} and therefore \eqref{eq:k-equiv} holds.
\end{proof}

\section{The algorithm for multiplication by 2}\label{sec:algo}

It is well known how to multiply continued fractions by integers.
More generally, given the continued fraction expansion of $\alpha$, there exist algorithms for computing the continued fraction expansion of $(a\alpha + b)/(c\alpha + d)$ for integers $a,b,c,d$ with $ad - bc \neq 0$. For example, Raney's algorithm \cite{Raney1973} is based on matrix multiplication and automata.
An older algorithm which works only when $B(\alpha)\geq 2|ad-bc|$ is given in \cite{Hurwitz1896}.
Mendès France \cite{Mendes1976} and Gosper \cite{GosperContFracArith} also give algorithms for computing the continued fraction expansion of $(a\alpha + b)/(c\alpha + d)$.
The original analyses of the these two algorithms, however, only justify termination\footnote{In the sense that for any $n$ it is possible to determine the $n$-th partial quotient of $(a\alpha + b)/(c\alpha + d)$ in finite time.} when $\alpha$ is rational.

For multiplication by two, there is a particularly simple and direct way to think about it, dating back to Hurwitz's 1891 paper \cite{Hurwitz1891}. There, he stated the following identities. 
Note that here nonnegative real numbers are allowed in the finite continued fractions.

\begin{lem}\label{lem:Hurwitz}
For integers $m,a,b\geq 0$ and a real number $\theta > 1$, we have
\begin{align*}
    2 \cdot [a;2m,b,\theta] &= [2a; m, 2b, \theta/2],\\
    2 \cdot [a;2m+1,\theta] &= [2a; m, 1, 1, (\theta-1)/2].
\end{align*}
\end{lem}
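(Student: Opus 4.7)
The plan is to verify both identities by direct algebraic computation: expand both sides into nested fractions in $\theta$, reduce to a common form, and check equality. There is no deep structure to exploit, only the definition of a continued fraction.

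For the first identity, I would start from
\[
    2\cdot [a;2m,b,\theta] = 2a + \frac{2}{2m + \frac{1}{b + 1/\theta}}
    \quad\text{and}\quad
    [2a;m,2b,\theta/2] = 2a + \frac{1}{m + \frac{1}{2b + 2/\theta}}.
\]
Cancelling $2a$ and taking reciprocals, it suffices to show
\[
    m + \frac{1}{2}\cdot\frac{1}{b + 1/\theta} \;=\; m + \frac{1}{2b + 2/\theta},
\]
which is immediate since $2(b+1/\theta) = 2b + 2/\theta$. This takes care of the ``even'' case: halving the partial quotient $2m$ and correspondingly doubling the next partial quotient $b$ (and halving the tail $\theta$) is exactly what multiplication by $2$ does.

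For the second identity, the same strategy works, but an extra simplification of the nested tail is needed. Starting from
\[
    2\cdot [a;2m+1,\theta] = 2a + \frac{2}{(2m+1) + 1/\theta} = 2a + \frac{2\theta}{(2m+1)\theta + 1},
\]
I would separately simplify the right-hand side. The innermost part
\[
    1 + \frac{2}{\theta - 1} = \frac{\theta+1}{\theta-1}
    \quad\Longrightarrow\quad
    1 + \frac{\theta-1}{\theta+1} = \frac{2\theta}{\theta+1}
    \quad\Longrightarrow\quad
    m + \frac{\theta+1}{2\theta} = \frac{2m\theta + \theta + 1}{2\theta},
\]
so that
\[
    [2a;m,1,1,(\theta-1)/2] = 2a + \frac{2\theta}{2m\theta + \theta + 1}.
\]
Since $(2m+1)\theta + 1 = 2m\theta + \theta + 1$, both sides agree.

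The main obstacle is simply bookkeeping in the odd case: one has to see why the magical pattern ``$m,1,1,(\theta-1)/2$'' arises. Conceptually, the ``$1,1$'' pair absorbs the leftover parity caused by the odd partial quotient $2m+1$, shifting a unit from the integer part into the subsequent convergent while correcting the tail from $\theta$ to $(\theta-1)/2$. Once this is noticed, the verification is mechanical and both identities hold as formal identities of rational functions in $\theta$, valid for every real $\theta>1$ (and in fact for all $\theta$ avoiding the finitely many poles that appear in the denominators).
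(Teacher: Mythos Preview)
Your proof is correct and follows exactly the approach the paper takes: the paper's proof consists of the single sentence ``The identities can be verified with simple algebra,'' and you have carried out precisely that verification in full detail.
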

\begin{proof}
The identities can be verified with simple algebra. 
\end{proof}

On the one hand, this shows that multiplication by $2$ is closely related to division by $2$ and computing $(x-1)/2$.
On the other hand, we can use the identities from \Cref{lem:Hurwitz} to obtain an algorithm for multiplication by 2. First, note that we can write $[2b;\theta/2] = 2b + 2\cdot [0;\theta]$. Second, for $\theta = [b;\omega]$ it is easy to check that $[1;(\theta - 1)/2] = 1 + 2\cdot [0; b-1, \omega]$.
Thus, \Cref{algo:2algo} gives us the rules (cf.\ \cite[p.~360, exercise 14]{Knuth1998})
\begin{align*}
    2 \cdot [0;2m,b,c, \ldots] &= [0; m, 2b + 2 \cdot [0;c,\ldots]],\\
    2 \cdot [0;2m+1, b,c, \ldots] &= [0; m, 1, 1 + 2 \cdot [0; b-1, c, \ldots]].
\end{align*}
In practice, this gives us the following algorithm.

\begin{algo}\label{algo:2algo} 
Given $\alpha = [a_0;a_1,a_2,\ldots]$, we want to compute the continued fraction expansion of $2\alpha$.
First, we record the digit $2a_0$ for the integer part of the resulting number (this might change later, after a ``clean-up process'').
Then we start by looking at the ``window'' $(a_1,a_2,a_3) =: (a,b,c)$. 
In each step, we record two or three digits and then ``slide to the next window'', according to the following rules:

\medskip

\textit{If $a$ is even:}
\begin{itemize}
    \item  Record the digits $a/2$ and $2b$.
    \item Slide to the next block $(c,d,e)$. I.e., if we were considering $(a_n, a_{n+1}, a_{n+2})$ or $(a_n -1, a_{n+1}, a_{n+2})$, then consider $(a_{n+2}, a_{n+3}, a_{n+4}) =: (a,b,c)$ next.
\end{itemize}

\textit{If $a$ is odd:}
\begin{itemize}
    \item Record the digits $(a-1)/2$, $1$ and $1$.
    \item Slide to the next block $(b-1,c,d)$. I.e., if we were considering $(a_n, a_{n+1}, a_{n+2})$ or $(a_n -1, a_{n+1}, a_{n+2})$, then consider $(a_{n+1}-1, a_{n+2}, a_{n+3}) =: (a,b,c)$ next.
\end{itemize}

This way we record a sequence of digits $d_1, d_2, \ldots$, where $d_i \geq 0$. Note that some of the $d_i$'s can be zero (coming from the cases where $a = 0$ or $a = 1$).
In order to eliminate the zeros, we do a ``clean-up'' by going from left to right and replacing every block of the shape $x,0,y$ by the single digit $x+y$.
This gives the continued fraction expansion of $2\alpha$. 
It is worth noting that the algorithm indeed provides us with the first $m$ partial quotients of $2\alpha$ after examining a finite number of partial quotients of $\alpha$. For precise bounds, see \Cref{lem:noofdigitsproduced}.
\end{algo}

Let us consider an example, which will be relevant again later.

\begin{ex}\label{ex:311}
Let $\alpha = (3 + \sqrt{17})/2 =[\overline{3;1,1}]$. When multiplying $\alpha$ by two, we start by recording the digit $6$ for the integer part. 
Then we consider the block $(1,1,3)$. Since its first digit, $1$, is odd, we record $0,1,1$, and move on to $(1-1,3,1)=(0,3,1)$. 
Now the first digit, $0$, is even, and so we record $0, 6$, moving on to $(1,1,3)$. Now we are back to where we started, which means that the process repeats periodically and we record the digits $0,1,1,0,6,0,1,1,0,6, \ldots$.
Together with the digit for the integer part, we have obtained $2\alpha = [6;0,1,1,0,6,0,1,1,0,6, \ldots]$.
Now we do the clean-up.
At the beginning, $6,0,1$ cleans up to $7$. After that, every block $1,0,6,0,1$ cleans up to $8$. Thus, we obtain $2\alpha = [7;\overline{8}]$.
\end{ex}

\begin{rem}\label{rem:windows}
Let $(a_n, a_{n+1}, a_{n+2})$ be a triple of consecutive partial quotients of $\alpha$. Then there are exactly three possibilities for how the algorithm will ``slide across'' this window:

\begin{enumerate}[label = \textit{Case \arabic*:}, ref= \arabic*]
\item The window $(a,b,c) = (a_n, a_{n+1}, a_{n+2})$ is considered because $n=1$ or because the first number of the previously considered block, corresponding to $(a_{n-2},a_{n-1},a_n)$, was even.

\item The modified window $(a,b,c) = (a_n - 1, a_{n+1}, a_{n+2})$ is considered because the first number of the previously considered block, corresponding to $(a_{n-1}, a_n, a_{n+1})$, was odd.

\item The window $(a_n, a_{n+1}, a_{n+2})$ is ``skipped'' because the first number of the previously considered block, corresponding to $(a_{n-1}, a_n, a_{n+1})$, was even. This led to recording $2 a_n$ and moving on to $(a_{n+1}, a_{n+2}, a_{n+3})$.
\end{enumerate}
\end{rem}

\begin{lem}\label{lem:parities}
Let $\alpha = [0; a_1, a_2, \ldots]$ and assume that we apply \Cref{algo:2algo} for multiplication by $2$ to $\alpha$. 
Let $n \geq 2$. Then the parities of $q_{n-1}, q_{n-2}$ determine in which of the three cases from \Cref{rem:windows} the window $(a_n, a_{n+1}, a_{n+2})$ is reached:

\begin{enumerate}[label = \textit{Case \arabic*:}, ref= \arabic*]
\item\label{it:case0} The block $(a_n, a_{n+1}, a_{n+2})$ is considered if and only if $q_{n-2}$ is even (recall that this implies that $q_{n-1}$ is odd).

\item\label{it:case-1} The block $(a_n - 1, a_{n+1}, a_{n+2})$ is considered if and only if $q_{n-2}$ and $q_{n-1}$ are both odd.

\item\label{it:caseskip} The window $(a_n, a_{n+1}, a_{n+2})$ is ``skipped'' (i.e., $2 a_{n}$ was recorded) if and only if $q_{n-1}$ is even (and $q_{n-2}$ is necessarily odd).
\end{enumerate}
\end{lem}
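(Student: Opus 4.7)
The plan is to induct on $n \geq 2$, showing at each step that the case into which window $(a_n, a_{n+1}, a_{n+2})$ falls is governed entirely by $(q_{n-2} \bmod 2,\, q_{n-1} \bmod 2)$. Two inputs are enough: the recurrence $q_n = a_n q_{n-1} + q_{n-2}$, and the coprimality of consecutive denominators, which rules out the pattern (even, even) and leaves exactly three possibilities, one for each of Cases 1--3.

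For the base case $n=2$, \Cref{algo:2algo} begins with the unmodified window $(a_1, a_2, a_3)$, so what happens at position $2$ is governed entirely by the parity of $a_1$: if $a_1$ is even, the algorithm slides to $(a_3, a_4, a_5)$ and window $2$ is skipped (Case 3); if $a_1$ is odd, the algorithm lands on $(a_2 - 1, a_3, a_4)$ (Case 2). Since $q_0 = 1$ and $q_1 = a_1$, both outcomes match the claim.

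For the inductive step, I would split on the case at position $n$. Suppose Case 1 holds at $n$; then by hypothesis $q_{n-2}$ is even and $q_{n-1}$ is odd, so $q_n \equiv a_n \pmod 2$. If $a_n$ is even, the algorithm slides by two, and $(q_{n-1}, q_n) = (\text{odd, even})$ matches Case 3 at $n+1$; if $a_n$ is odd, it slides by one with decrement, and $(q_{n-1}, q_n) = (\text{odd, odd})$ matches Case 2 at $n+1$. Suppose Case 2 holds at $n$; then $q_{n-2}, q_{n-1}$ are both odd, so $q_n \equiv a_n + 1 \pmod 2$, while the effective first digit $a_n - 1$ has parity opposite to $a_n$. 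A symmetric check then matches both subcases to the predicted pattern. Finally, in Case 3 at $n$ the algorithm is not acting at step $n$ but has already committed to landing unmodified at $n+1$, giving Case 1 at $n+1$; checking parities, $q_{n-1}$ even and $q_{n-2}$ odd force $q_n \equiv q_{n-2} \equiv 1 \pmod 2$, so $(q_{n-1}, q_n) = (\text{even, odd})$, matching Case 1 at $n+1$ as required.

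The main obstacle is really just bookkeeping; the only minor subtlety is that Case 3 at position $n$ does not correspond to an action of the algorithm at that step, so the transition $n \to n+1$ there is inherited from step $n-1$. This is automatically consistent, since the parity of $q_n$ is determined by the recurrence alone, independently of which step is deemed \emph{active}.
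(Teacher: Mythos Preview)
Your proof is correct and is exactly the ``straightforward induction'' that the paper invokes (the paper gives only a one-line proof citing the recurrence $q_{n+1}=a_{n+1}q_n+q_{n-1}$ with $q_0=1$, $q_1=a_1$). Your case analysis is precisely what that induction unwinds to, including the observation that Case~1 cannot occur at $n=2$ since $q_0=1$ is odd.
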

\begin{proof}
Recalling that $q_0 = 1$, $q_1 = a_1$, and $q_{n+1} = a_{n+1} q_n + q_{n-1}$ for $n\geq 1$, the lemma follows from a straighforward induction.
\end{proof}

\begin{rem}\label{rem:proof_parities}
In \Cref{lem:parities} we see that each partial quotient $a_n$ either gets roughly divided by $2$ (Cases~\ref{it:case0} and \ref{it:case-1}), or multiplied by $2$ (Case~\ref{it:caseskip}). In fact, $a_n$ gets multiplied by two if and only if $q_{n-1}$ is even. This can also be seen from the following argument:
We know that
\[
	\frac{1}{q_{n-1}^2 (a_n+2)} 
	< \left| \alpha - \frac{p_{n-1}}{q_{n-1}} \right|
	< \frac{1}{q_{n-1}^2 a_n}.
\]

Now if $q_{n-1}$ is even, we can rewrite this as 
\[
	\frac{1}{(q_{n-1}/2)^2 \cdot (2a_n+4)} 
	< \left| 2\alpha - \frac{p_{n-1}}{q_{n-1}/2} \right|
	< \frac{1}{(q_{n-1}/2)^2 \cdot 2a_n}.
\]
By Legendre's theorem, $p_{n-1}/(q_{n-1}/2)$ is a convergent to $2 \alpha$, and from the inequalities we see that the corresponding partial quotient is between $2 a_n - 2$ and $2a_n + 4$, i.e., roughly of the size $2 a_n$.

On the other hand, if $q_{n-1}$ is odd, then still 
\[
    \left| 2\alpha - \frac{2 p_{n-1}}{q_{n-1}} \right|
	< \frac{1}{(q_{n-1})^2 \cdot (a_n/2)}.
\]
So if we assume $a_n \geq 4$, then by Legendre's Theorem $q_{n-1}$ still corresponds to a convergent to $2 \alpha$, but now the partial quotient is roughly of size $a_n/2$.
\end{rem}

As mentioned earlier, multiplication by 2, division by 2, and computing $(x-1)/2$ are closely related. 
In fact, all three operations can essentially be done with the same algorithm, except that the three variants of the algorithm correspond to the three different ways of reaching windows from \Cref{lem:parities}. Note that $(x-1)/2$ and $(x+1)/2$ differ by $1$, and addition of an integer to a continued fraction alters only its zeroth partial quotient. Hence, we might as well consider $(x+1)/2$ instead. This is more natural if we think in terms of computations modulo 1 because there $x/2$ is in fact the set $\{x/2, (x+1)/2\}$. The next lemma describes what happens when we compute $x/2$ and $(x+1)/2$ compared to computing $2x$.

\begin{lem}\label{lem:notcollide}
Let $\alpha = [a_0; a_1, a_2, \ldots]$ and let 
\begin{align*}
	[b_0; b_1, b_2, \ldots] &= 2 \cdot [0;a_0,a_1, \ldots] = 2\cdot 1/\alpha,\\
	[c_0; c_1, c_2, \ldots] &= 2 \cdot [0;a_0 + 1, a_1,a_2, \ldots] = 2 \cdot 1/(\alpha + 1).
\end{align*}
Then we have
\begin{align*}
	\alpha/2 &= [0;b_0,b_1,\ldots],\\
	(\alpha+1)/2 &= [0;c_0,c_1,c_2, \ldots],
\end{align*}
where the digits $b_0, c_0$ might be zero.
Moreover, as we apply \Cref{algo:2algo} to $\alpha$, $1/\alpha$ and $1/(\alpha +1)$, the considered windows never ``collide'', i.e. for every $n\geq 1$ the three algorithms reach the window $(a_n, a_{n+1}, a_{n+2})$ in exactly the three different cases from \Cref{lem:parities}.
\end{lem}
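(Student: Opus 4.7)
The first two displayed equations are a one-line algebraic manipulation: since $1/[x;y_1,y_2,\ldots]=[0;x,y_1,y_2,\ldots]$, one gets $\alpha/2=1/(2/\alpha)=[0;b_0,b_1,\ldots]$, and the same computation applied to $\alpha+1$ yields the identity for $(\alpha+1)/2$. The degenerate case where $b_0$ or $c_0$ equals $0$ is explicitly allowed in the statement, so no clean-up is required.

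For the non-collision assertion, my plan is to convert everything to a parity computation on the convergents $(p_n,q_n)$ of $\alpha$, via \Cref{lem:parities}. A short induction on the continued fraction recurrence shows that the convergent denominators of $1/\alpha=[0;a_0,a_1,\ldots]$ are $1,p_0,p_1,p_2,\ldots$, and those of $1/(\alpha+1)=[0;a_0+1,a_1,\ldots]$ are $1,p_0+q_0,p_1+q_1,\ldots$ (the shift by $1$ in the first partial quotient propagates to $p_n\mapsto p_n+q_n$, while the $q_n$ are unchanged). Combining this with \Cref{lem:parities}, the case in which the window $(a_n,a_{n+1},a_{n+2})$ is reached by the three algorithms is determined, respectively, by the parities of $(q_{n-2},q_{n-1})$, of $(p_{n-2},p_{n-1})$, and of $(p_{n-2}+q_{n-2},\,p_{n-1}+q_{n-1})$, with the usual convention $p_{-1}=1,\,q_{-1}=0$ covering the initial window.

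It then suffices to check that these three pairs of parities always realize the three distinct cases of \Cref{lem:parities}. Set $T_n:=(p_n\bmod 2,\,q_n\bmod 2)$; since $\gcd(p_n,q_n)=1$, we have $T_n\in\{(0,1),(1,0),(1,1)\}$. Reducing the standard identity $p_nq_{n-1}-p_{n-1}q_n=\pm 1$ modulo $2$ immediately forbids $T_{n-1}=T_n$, leaving only six possible transitions. Now the three maps $T\mapsto q\bmod 2$, $T\mapsto p\bmod 2$, and $T\mapsto (p+q)\bmod 2$ are precisely the three nontrivial $\Z/2$-linear forms on $(\Z/2)^2$, and any two of them coincide on exactly one nonzero value of $T$. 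Hence two of the algorithms could agree at both indices $n-2$ and $n-1$ only if $T_{n-2}=T_{n-1}$ equaled that specific nonzero value, which is excluded by the transition constraint. The three algorithms therefore always fall into the three distinct cases. The main obstacle I foresee is purely bookkeeping: since the three continued fractions are index-shifted and have different integer parts, aligning what ``the window $(a_n,a_{n+1},a_{n+2})$'' means across all three algorithms requires care, but once this is done the content of the proof collapses to the short parity check above.
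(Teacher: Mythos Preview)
Your proof is correct. The identities are handled exactly as in the paper. For the non-collision part, however, you take a genuinely different route: the paper merely says the claim ``can be checked by a straightforward induction'' and then gives an alternative slick argument---observe that the sliding behaviour depends only on the parities of the $a_i$, so without loss of generality $\alpha$ has unbounded partial quotients; but if two of the three algorithms ever collide they produce identical tails, forcing two of $2\alpha,\alpha/2,(\alpha+1)/2$ to be equivalent and hence $\alpha$ to be quadratic, a contradiction.

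Your approach instead makes the ``straightforward induction'' explicit and quantitative, routing it through \Cref{lem:parities}. You identify the convergent denominators of $1/\alpha$ and $1/(\alpha+1)$ as $p_{k}$ and $p_k+q_k$ (with the index shift), reduce the three cases to the values of the three nontrivial $\Z/2$-linear forms $q$, $p$, $p+q$ on $T_k=(p_k\bmod 2,q_k\bmod 2)$, and finish with the observation that $p_nq_{n-1}-p_{n-1}q_n=\pm 1$ forbids $T_{n-2}=T_{n-1}$, while any two of the three forms agree only on a single nonzero $T$. This is a clean linear-algebra-over-$\Z/2$ argument and arguably more informative than the paper's contradiction: it tells you \emph{which} case each algorithm is in at every window, not merely that they differ. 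The paper's version, on the other hand, is shorter to state and avoids the index bookkeeping you flag at the end (aligning the window $(a_n,a_{n+1},a_{n+2})$ across the three shifted continued fractions, and extending \Cref{lem:parities} to $n=1$ via $p_{-1}=1$, $q_{-1}=0$). Both work; yours is the one a reader could actually verify line by line.
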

\begin{proof}
The identities hold because $1/[a_0;a_1,a_2, \ldots] = [0;a_0,a_1, \ldots]$.

The fact the algorithms do not ``collide'' can be checked by a straightforward induction. Another way to see this is the following: Assume that there exists an $\alpha = [a_0;a_1,a_2, \ldots]$ such that two of the three algorithms collide at a window $(a_n, a_{n+1}, a_{n+2})$.
Since the behaviour of the ``sliding'' of the algorithms only depends on the parities of $a_1, a_2, \ldots$, the same would happen for example for the number $\alpha' := [a_0, a_1 + 2, a_2 + 4, a_3 + 6, \ldots]$. Thus, we may assume w.l.o.g.\ that $\alpha$ has unbounded partial quotients. Now, since two of the algorithms collide at the window $(a_n, a_{n+1}, a_{n+2})$, from then on they produce the same digits. But then two of the numbers $2\alpha, \alpha/2, (\alpha + 1)/2$ are equivalent. This implies that $\alpha$ satisfies a quadratic equation, which contradicts our assumption that $\alpha$ has unbounded partial quotients.
\end{proof}

In \Cref{ex:311}, for every block $1,1,3$ that we passed, we recorded a total of $3 + 2 = 5$ digits. In the clean-up process, blocks of length $5$ were reduced to single digits. This means that overall, we had to read three digits of $\alpha$ to produce one digit of $2\alpha$. This is indeed the ``worst'' possible situation. The ``best'' case is if every digit produces three digits, which happens if $a_1$ is odd and $a_2, a_3, \ldots$ are all even and $\geq 4$. 
We formulate this as a lemma.

\begin{lem}\label{lem:noofdigitsproduced}
Let $\alpha=[a_0;a_1,a_2,\dots]$ and suppose \Cref{algo:2algo} is given its first $n+1$ partial quotients $a_0, a_1, \ldots, a_{n}$.
Moreover, assume that every recorded $0$ is cleaned up as soon as possible (i.e., as soon as the succeeding digit is recorded).
Then the algorithm produces a sequence of digits $b_0, b_1, \ldots, b_{m}, b_{m+1}$, where $b_0,\dots,b_{m}$ are the first $m+1$ partial quotients of $2\alpha$ (and $b_{m+1}$ may still be affected by the clean-up), and we have
\[
    \floor{(n+2)/3} - 1
    \leq m
    \leq 3n - 1.
\]
These two bounds are tight in the following sense: For each bound there exist numbers $\alpha$ such that the algorithm achieves the bound for all $n$.
\end{lem}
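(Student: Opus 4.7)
The plan is to split the proof into three pieces: (a) the upper bound $m\le 3n-1$ via a raw-digit count, (b) the lower bound $m\ge\floor{(n+2)/3}-1$ via a potential argument, and (c) the exhibition of explicit families of $\alpha$ witnessing tightness for every $n$.

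For the upper bound I would observe that the algorithm records $b_0 = 2a_0$ initially and then proceeds in steps. An odd-first step records three digits while consuming one new partial quotient of $\alpha$, and an even-first step records two digits while consuming two. Thus, after having read $a_0, \ldots, a_n$, the algorithm has fired at most $n$ steps, producing at most $1 + 3n$ raw digits. Since clean-up only deletes digits, $d' := m + 2 \le 1 + 3n$, giving $m \le 3n - 1$. Tightness is achieved by any $\alpha$ whose partial quotients satisfy $a_1$ odd with $a_1 \ge 3$, and $a_i$ even with $a_i \ge 4$ for all $i \ge 2$: every step is then odd-first with first-recorded digit at least $1$, no clean-up is ever triggered, and $d' = 1 + 3n$ exactly.

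For the lower bound I would introduce the potential $\phi := 3d' - n$, starting at $\phi_0 = 3$ (since $d' = 1$ when $n = 0$). A case analysis gives
\[
    \Delta \phi \in \{+8,\ +2,\ +4,\ -2\}
\]
for the four step types odd-without-zero, odd-with-zero, even-without-zero, and even-with-zero respectively, where ``with zero'' refers to the step producing a zero as its first-recorded digit. The only troublesome case is even-with-zero, but this can occur only when the window's first entry is $0$, which arises exactly from a previous odd step with $a_{i+1} = 1$; in particular the very first step can never be of this type, since its window's first entry is $a_1 \ge 1$. Pairing each even-with-zero step with its immediate odd predecessor (whose $\Delta \phi \ge +2$ compensates) shows that $\phi \ge 3$ is preserved throughout. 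Hence $d' \ge (n+3)/3$, and integrality of $d'$ upgrades this to $d' \ge \floor{(n+2)/3} + 1$, i.e.\ $m \ge \floor{(n+2)/3} - 1$. Tightness is witnessed by $\alpha = [0;\overline{1, 1, c}]$ for any integer $c \ge 1$: as in \Cref{ex:311}, the algorithm alternates odd-with-zero and even-with-zero steps, each pair consuming three input partial quotients with zero net $\Delta \phi$, keeping $\phi$ at exactly $3$ and yielding equality for all $n$.

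The main technical obstacle is the structural claim underlying the lower bound, namely that every even-with-zero step is preceded by an odd one. Verifying this amounts to tracing how the window's first entry is formed after each of the two possible slides: after an even step the new window begins at $a_{i+2} \ge 1$, so its first entry is never $0$, whereas after an odd step it begins at $a_{i+1} - 1$, which vanishes exactly when $a_{i+1} = 1$. A secondary subtlety is the careful bookkeeping of the algorithm's tentative last digit $b_{m+1}$ and of any parity-check look-ahead at partial quotients that do not lead to a fully-fired step; such boundary effects only introduce discrepancies of order one, which are absorbed when passing from $d' \ge (n+3)/3$ to the stated integer bound.
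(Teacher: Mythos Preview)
Your argument follows essentially the same route as the paper's. Both split into the same four step types and observe that the sole deficit step (your ``even-with-zero'', the paper's Case~d)) is always immediately preceded by an odd step; the paper phrases the consequence as ``the worst case is alternating Case~c) and Case~d)'', while you encode it via the potential $\phi=3d'-n$ and a pairing, but the content is identical, and the tightness families coincide.

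One caveat on your boundary hand-wave: the claim that the $O(1)$ discrepancy is ``absorbed when passing from $d'\ge(n+3)/3$ to the stated integer bound'' does not hold as written. Your $\phi$ tracks \emph{consumed} input $n'$, and when the pending window has even first entry one may have $n'=n-1$; since $\lceil(n'+3)/3\rceil=\floor{(n'+2)/3}+1$ with no slack, the resulting estimate $d'\ge(n+2)/3$ falls one short of $\floor{(n+2)/3}+1$ precisely when $n\equiv 1\pmod 3$ (e.g.\ $\alpha=[a_0;1,1,2,2,\dots]$ with $n=4$). The easy fix is to split on the pending first entry $a$: if $a\ge 2$ then the digit $a/2\ge 1$ is already determined and may be appended, restoring the count; if $a=0$ then the last fired step was odd, so by your own pairing argument applied to the shorter prefix one has $\phi(n')\ge 5$, giving $d'\ge(n+4)/3$, which again suffices. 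The paper's proof is equally informal at this boundary (it defers to ``one can check''), so this is a shared loose end rather than a difference in method.
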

\begin{proof}
The upper bound $3n$ results from the following observations:
\begin{itemize}
    \item In the initialization phase, the zeroth partial quotient leads to the recording of exactly one digit. After that, the first window $(a_1,a_2,a_3)$ is considered in the first step.
    \item In each step, the considered window slides to the right by at least one index.
    \item The algorithm records at most three new digits per step.
\end{itemize}

Hence, when given the partial quotients $a_0, a_1, \ldots, a_n$, the algorithm takes at most $n$ steps, recording at most $1+3n$ digits.
The last digit may still be affected by the clean-up, resulting in the bound $m+1 \leq 3n$.
This upper bound is attained on any $n$ and $\alpha$ of the form $[a_0;a_1,a_2,\dots]$ where $a_0$ is an arbitrary integer, $a_1\geq 3$ is odd, and $a_i\geq 4$ is even for all $i\geq 2$.


To prove the lower bound, we need to account for the clean-up process. Let us have a closer look at what the algorithm does if we do the clean-up immediately. We consider four cases, according to the type of block $(a,b,c)$ that is hit in each step.
\begin{enumerate}[label = \alph*)]
    \item $a\geq 3$ odd. We record $(a-1)/2,1,1$, and no clean-up happens. This means that we consumed one digit and produced three digits.
    \item $a\geq 2$ even. We record $a/2, 2b$, and no clean-up happens. We consumed 2 digits and produced two digits.
    \item\label{it:1} $a = 1$. We record $0,1,1$. In the clean-up, the first two recorded digits ``disappear'', in the sense that they get added to the the previously produced digit. Thus, we have produced one digit (the last recorded $1$) and consumed one digit.
    \item\label{it:0} $a = 0$. We record $0,2b$. In the clean-up, both digits ``disappear'', in the sense that they get added to the previously produced digit. We have produced zero digits and consumed two.
\end{enumerate}
Clearly, Case~\ref{it:0} produces the least number of digits, namely it produces zero and consumes two. However, after Case~\ref{it:0} we move on to $(c, d , e)$, where $c\neq 0$. Therefore we cannot be in Case~\ref{it:0} for two consecutive steps.
In all other cases, at least one digit is produced per consumed digit.
In the initial step, one digit is produced, and in the first step we cannot be in 
Case~\ref{it:0}.
Overall, the worst possible case is that we record one initial digit, and then alternate between Case~\ref{it:1} and Case~\ref{it:0}. This happens exactly for numbers of the shape $[x_0;1,1,x_1,1,1,x_2,1,1,\dots]$, and one can check that after using the first $n+1$ digits $a_0, a_1, \ldots, a_{n}$, the algorithm produces the digits $b_0, b_1, \ldots, b_{m+1}$ with $m+1 = \floor{(n+2)/3}$.
\end{proof}

\section{Multiplication by 2 and equivalence}\label{sec:2equiv}

\begin{ex}\label{ex:311-2}
Recall that for $\alpha = (3 + \sqrt{17})/2 =[\overline{3;1,1}]$ from \Cref{ex:311} we have $2\alpha = [6;\overline{8}]$.
However, if one computes the related numbers $\alpha/2$ and $(\alpha+1)/2$, something interesting happens: We get $\alpha/2 = [\overline{1;1,3}]$ and $(\alpha+1)/2 = [2;\overline{3,1,1}]$; in other words, $\alpha \sim \alpha/2 \sim (\alpha+1)/2$.

Moreover, we can compute, for example, $(\frac{\alpha}{2} + 1)/2 = [1;2,\overline{1;1,3}]$, so this number is equivalent to $\alpha$ as well.
\end{ex}

Thinking in terms of the three cases of the algorithm, it is perhaps already clear what is going on in \Cref{ex:311-2}: For every $\alpha \sim [\overline{3;1,1}]$, two of the three numbers $2\alpha, \alpha/2, (\alpha+1)/2$ are equivalent to $[\overline{3;1,1}]$ as well. In particular, this allows us to construct arbitrarily long ``chains'' of numbers of the shape $\alpha \sim 2\alpha \sim \dots \sim 2^k \alpha \sim [\overline{3;1,1}]$.

To formalize this, we state the next two Lemmas.

\begin{lem}\label{lem:2outof3}
Assume that $\alpha \sim \alpha/2 \sim (\alpha+1)/2$. Then for each $\beta \sim \alpha$ exactly two out of the three numbers 
\[
    2\beta,\ \frac{\beta}{2},\ \frac{\beta+1}{2}
\]
are equivalent to $\alpha$.    
\end{lem}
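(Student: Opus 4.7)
The plan is to analyze the three ``multiplication-by-$2$-type'' algorithms $\sigma_0(x) = 2x$, $\sigma_1(x) = x/2$, $\sigma_2(x) = (x+1)/2$ acting on the common periodic tail of $\alpha$ and $\beta$, and to exploit the rigidity provided by \Cref{lem:notcollide}.

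First, the hypothesis $\alpha \sim \alpha/2$ already forces $\alpha$ to be a quadratic irrational: clearing $\alpha/2 = (a\alpha+b)/(c\alpha+d)$ with $ad-bc = \pm 1$ gives $c\alpha^2 + (d-2a)\alpha - 2b = 0$ with $c \neq 0$. Hence $\alpha$ has an eventually periodic continued fraction with periodic tail $T$, and every $\gamma \in [\alpha]$ shares this tail. I would then study the case sequence of each algorithm along $T$. By \Cref{lem:parities}, the case at window $n$ is determined by the parity state $(Q_{n-2}, Q_{n-1}) \bmod 2$, which takes the three values $(0,1),(1,1),(1,0)$ and evolves in the tail by $Q_n \equiv a_n Q_{n-1} + Q_{n-2} \pmod{2}$. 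Applied to any fixed $\gamma$, \Cref{lem:notcollide} says the three algorithms occupy the three distinct parity states at every window, in particular at the first tail window. From then on each algorithm's behaviour (and, after clean-up, the tail of its output) is determined by the starting parity state together with $T$. So the three output tails $O_{(0,1)}, O_{(1,1)}, O_{(1,0)}$ depend only on $T$; only the assignment of outputs to algorithms can vary with $\gamma$.

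Specializing to $\gamma = \alpha$, the hypothesis says the tails of $\alpha/2$ and $(\alpha+1)/2$ both equal $T$, so at least two of the three $O_{(i,j)}$ equal $T$. To see that the third does not (hence to conclude \emph{exactly} two), I would check $\alpha \not\sim 2\alpha$. By \Cref{lem:equiv_mipo} with $m=2$, $\alpha \sim 2\alpha$ forces the leading coefficient $A$ of $\alpha$'s primitive minimal polynomial to be even; a parallel direct computation starting from $\alpha/2 = (a\alpha+b)/(c\alpha+d)$ forces the constant coefficient $C$ to be even, since the arising integer parameter $\ell$ must be odd (else $ad-bc$ becomes even) and then $b = -C\ell/2$ requires $C$ even. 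With $\gcd(A,B,C)=1$, $B$ would then be odd. Redoing the analysis for $\alpha \sim (\alpha+1)/2$ (clearing denominators and matching against a multiple of $Ax^2 + Bx + C$) gives $c = A\ell$, $d = (B-A)\ell + 2a$, $b = ((B-A-C)\ell + 2a)/2$; under $A,C$ even and $B$ odd the quantity $B-A-C$ is odd, forcing $\ell$ even, and a short expansion then shows $ad-bc$ is divisible by $2$, contradicting $ad-bc = \pm 1$. Hence under our hypothesis $\alpha \not\sim 2\alpha$, so exactly two of $O_{(0,1)}, O_{(1,1)}, O_{(1,0)}$ equal $T$.

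Since this set of three tails depends only on $T$, the same count applies to $\beta$: exactly two of $2\beta, \beta/2, (\beta+1)/2$ have tail $T$, which (as $\beta \sim \alpha$) is equivalent to being in $[\alpha]$. The main obstacle I anticipate is the rigorous verification of the key structural claim that the output tail of each algorithm depends only on the starting parity state and $T$. This requires a careful analysis of how the clean-up of \Cref{algo:2algo} interacts with the boundary between the finite transient and the periodic tail, in the spirit of the bounded-look-ahead estimate in \Cref{lem:noofdigitsproduced}; once one shows that after the parity state enters its cycle both the recording process and the clean-up become strictly periodic, the rest follows cleanly.
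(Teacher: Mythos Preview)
Your proposal is correct, and the ``at least two'' half is essentially the paper's argument: the three algorithms hit the periodic tail in the three distinct parity states (\Cref{lem:notcollide}), so the three output tails depend only on $T$, and the hypothesis on $\alpha$ pins down two of them as $T$.

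Where you diverge is in the ``not all three'' step. You show $\alpha\not\sim 2\alpha$ directly by a coefficient--parity analysis: $\alpha\sim 2\alpha$ forces $A$ even (via \Cref{lem:equiv_mipo} with $m=2$), $\alpha\sim\alpha/2$ forces $C$ even, and then $\alpha\sim(\alpha+1)/2$ leads to a parity obstruction in $ad-bc$. This is correct and pinpoints exactly where the obstruction sits in the minimal polynomial. The paper instead argues more structurally: if all three output tails were $T$, then for any $\gamma\sim\alpha$ each of $2\gamma,\gamma/2,(\gamma+1)/2$ is again $\sim\alpha$; iterating the multiplication gives $\beta\sim 2^k\beta$ for all $k\ge 0$, and \Cref{lem:equiv_mipo} then forces $2^k\mid A$ for every $k$, which is absurd. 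The paper's route is a one--line contradiction once the iteration is observed, avoiding your three separate M\"obius computations; your route is more hands--on but self--contained and does not need to propagate the equivalence through all powers of $2$. Both proofs lean on the same structural claim you flag at the end---that the output tail depends only on the parity state at the entry to $T$---which the paper also uses without further comment; your instinct that this is the point deserving the most care is sound.
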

\begin{proof}
First of all, note that by \Cref{lem:alpham_equiv}, $\alpha\sim\frac{\alpha}{2}$ implies that $\alpha$ is a quadratic irrational. Assume that the period in its continued fraction expansion is $\overline{c_1, \ldots , c_n}$.
By \Cref{lem:notcollide}, the two algorithms for computing $\alpha/2$ and $(\alpha + 1)/2$ hit the first occurrence of the period in two out of the three possible ways from \Cref{rem:windows} (namely $(c_1, c_2, c_3)$ or $(c_1 - 1, c_2, c_3)$ or skip that window). 
On the other hand, since $\beta \sim \alpha$, the algorithms for computing 
$2\beta, \beta/2, (\beta+1)/2$ will also eventually hit the period $\overline{c_1, \ldots ,c_n}$. Two out of the three algorithms will from that point on produce the same digits as the algorithms for computing $\alpha/2$ and $(\alpha + 1)/2$. 
Hence, two elements of the set $\{2\beta,\ \frac{\beta}{2},\ \frac{\beta+1}{2}\}$ must be equivalent to $\alpha$.

Finally, we check that not all three numbers can be equivalent to $\alpha$. 
Assume towards a contradiction that they are.
This means that all three ways to hit the periodic part of $\beta$ result in a number that is equivalent to $\beta$. By extension, we get $\beta\sim 2^k\beta$ for all $k\in\N$. Letting $Ax^2+Bx+C$ be the minimal polynomial of $\beta$, \Cref{lem:equiv_mipo} implies that for each $k\in\N$, there exist $c_k,\ell_k\in\Z$ such that $A\ell_k=2^kc_k$ and $\gcd(\ell_k,2)=1$. This leads to a contradiction as soon as $k$ is so large that $A$ is not divisible by $2^k$.
\end{proof}

Now we can build arbitrarily long ``chains'' of equivalent numbers.

\begin{lem}\label{lem:long_equ_blocks}
Assume that $\alpha \sim \alpha/2 \sim (\alpha+1)/2$ for some quadratic irrational $\alpha$. 
Then for every $K$ there exists a $\beta$ such that $\alpha \sim \beta \sim 2 \beta \sim \dots \sim 2^K \beta$.
\end{lem}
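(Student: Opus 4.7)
The plan is to construct $\beta$ by iterating backwards from $\alpha$, using \Cref{lem:2outof3} at each step.

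First I would extract a useful corollary from \Cref{lem:2outof3}: for every $\gamma \sim \alpha$, at least one of $\gamma/2$ and $(\gamma+1)/2$ is equivalent to $\alpha$. Indeed, exactly two of the three numbers $2\gamma,\ \gamma/2,\ (\gamma+1)/2$ are equivalent to $\alpha$ by that lemma, so at most one of $\{\gamma/2, (\gamma+1)/2\}$ can fail to be equivalent to $\alpha$.

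Next I would build the chain backwards. Set $\gamma_K := \alpha$, and for $k = K, K-1, \ldots, 1$ inductively pick
\[
    \gamma_{k-1} \in \left\{ \tfrac{\gamma_k}{2},\ \tfrac{\gamma_k+1}{2} \right\}
\]
such that $\gamma_{k-1} \sim \alpha$; by the preceding observation (applied to $\gamma = \gamma_k$, which is $\sim \alpha$ by induction) such a choice exists. Then define $\beta := \gamma_0$.

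Finally, I would verify that $2^k \beta \sim \alpha$ for each $0 \leq k \leq K$. The key point is that by construction $2\gamma_{k-1} - \gamma_k \in \{0,1\}$ for every $k$, so a short induction on $k$ gives that $2^k \beta - \gamma_k$ is an integer (each doubling introduces an integer offset, and doubling previously accumulated integer offsets keeps them integral). Since $\gamma_k \sim \alpha$ and equivalence is preserved under integer translation, we get $2^k \beta \sim \gamma_k \sim \alpha$, as required.

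There is no real obstacle here beyond correctly applying \Cref{lem:2outof3}; the only mildly delicate point is verifying the integer-offset bookkeeping in the last step, which is what lets the chain $\beta \sim 2\beta \sim \cdots \sim 2^K \beta$ stay inside the equivalence class of $\alpha$ even when some steps of the backwards construction pick $(\gamma_k+1)/2$ instead of $\gamma_k/2$.
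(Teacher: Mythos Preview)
Your proposal is correct and follows essentially the same approach as the paper: start from $\alpha$ at level $K$, step backwards by repeatedly choosing one of the two halvings $\gamma/2$, $(\gamma+1)/2$ that stays in the equivalence class of $\alpha$ (which \Cref{lem:2outof3} guarantees), set $\beta$ to be the bottom of the chain, and then observe that $2^k\beta$ differs from the $k$-th intermediate by an integer. The paper's proof is identical up to notation (it writes $\beta_\ell$ where you write $\gamma_\ell$) and makes the same integer-offset observation to finish.
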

\begin{proof}
Assume that $\alpha \sim \alpha/2 \sim (\alpha+1)/2$ and set $\beta_K = \alpha$. By assumption, at least one of $\beta_K/2, (\beta_K + 1)/2$ is equivalent to $\alpha$. We set $\beta_{K-1}$ to be such a number, i.e., $\beta_{K-1} \sim \alpha$ and either $2 \beta_{K-1} = \beta_K$ or $2 \beta_{K-1} = \beta_K + 1$. Analogously, \Cref{lem:2outof3} enables us to construct $\beta_{K-2} \sim \alpha$ from $\beta_{K-1}$. Inductively, we can construct $\beta_1 \sim \beta_2 \sim \dots \sim \beta_{K-1} \sim \beta_{K} \sim \alpha$ with the property that $2 \beta_\ell \in \{ \beta_{\ell+1}, \beta_{\ell+1}+1\}$ for every $1\leq \ell \leq K-1$. This implies that $\beta_1 \cdot 2^\ell$ and $\beta_\ell$ only differ by an integer. Thus, $\beta_1  =: \beta \sim 2\beta \sim \dots \sim 2^K \beta \sim \alpha$.
\end{proof}

\begin{ex}\label{ex:311-chain}
In particular, from \Cref{lem:long_equ_blocks} and \Cref{ex:311-2}, we see that for every $K\geq 0$ it is possible to find an $\alpha$ such that $2^k \alpha \sim [\overline{3;1,1}]$ for all $0 \leq k \leq K$.
\end{ex}

An obvious question is whether other digit patterns with this property exist. The next two lemmas provide an infinite family of such patterns.
In fact, the family is a direct generalization of the example $[\overline{3;1,1}]$, which is a root of $x^2 - 3x - 2$.
We remark that this family does not cover all patterns with this property. We currently do not know how to characterize all such patterns; see \Cref{sec:problems} for more details and open problems.

\begin{lem}\label{lem:alpham_equiv}
Let $m\geq 3$ be an odd integer and let $\alpha$ be a root of the polynomial $x^2 - mx - 2$.
Then $\alpha \sim \alpha/2 \sim (\alpha+1)/2$.    
\end{lem}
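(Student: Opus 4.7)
The plan is to use \Cref{lem:equiv_mipo} (with the integer denoted $m$ in that lemma set equal to $2$) to establish each of the two claimed equivalences separately. Throughout, $m$ refers to the odd integer from the hypothesis.

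For the first equivalence $\alpha \sim \alpha/2$, I would set $\beta = \alpha/2$ and observe that the claim is equivalent to $\beta \sim 2\beta$. Substituting $\alpha = 2\beta$ into $\alpha^2 - m\alpha - 2 = 0$ shows that the minimal polynomial of $\beta$ is $2x^2 - mx - 1$, which is already primitive (its constant term is $-1$). I would then verify that the tuple $(a,b,c,d,\ell) = (m,1,1,0,1)$ meets the conditions of \Cref{lem:equiv_mipo}: equations \eqref{eq:A}--\eqref{eq:C} become $2 = 2$, $-m = -m$, and $-1 = -1$; the coprimality $\gcd(\ell,2)=1$ is immediate; and $ad - bc = 0 - 1 = -1$.

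For the second equivalence $\alpha \sim (\alpha+1)/2$, set $\gamma = (\alpha+1)/2$. Since $\alpha \sim \alpha + 1 = 2\gamma$, it suffices to show $\gamma \sim 2\gamma$. Substituting $\alpha = 2\gamma - 1$ into the defining relation and then dividing by $2$---which is permitted precisely because $m$ is odd, so that $m - 1$ is even---yields the primitive minimal polynomial $2\gamma^2 - (m+2)\gamma + (m-1)/2 = 0$. I would then apply \Cref{lem:equiv_mipo} with the tuple
\[
    (a,b,c,d,\ell) = \bigl(1,\,-(m-1)/2,\,1,\,-(m+1)/2,\,1\bigr).
\]
Both $b$ and $d$ are integers because $m$ is odd. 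A short check gives equations \eqref{eq:A}--\eqref{eq:C}, and
\[
    ad - bc = -\tfrac{m+1}{2} + \tfrac{m-1}{2} = -1.
\]

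The only non-routine part is finding the correct $d$ above. A natural way to discover it is to observe that once $\ell = c = 1$ is chosen and $a$, $b$ are expressed via $d$ using \eqref{eq:B} and \eqref{eq:C}, condition \eqref{eq:abcd} reduces to $2d^2 + (m+2)d + (m-1)/2 = \pm 1$. The ``$-1$'' version has discriminant $4m^2$, yielding the integer solution $d = -(m+1)/2$ precisely when $m$ is odd. This is where the oddness hypothesis is used in an essential way, and it is also the main obstacle, since nothing analogous works for even $m$. Once $d$ is in hand, combining the two steps gives $\alpha \sim \alpha/2$ and $\alpha \sim (\alpha+1)/2$, as required.
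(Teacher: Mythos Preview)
Your proof is correct. Both equivalences check out: the minimal polynomials are computed correctly, the tuples satisfy all four conditions of \Cref{lem:equiv_mipo}, and the oddness of $m$ is used exactly where needed (integrality of $(m-1)/2$ and $(m+1)/2$, and primitivity of $2\gamma^2-(m+2)\gamma+(m-1)/2$).

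The paper takes a more direct route. Rather than passing through \Cref{lem:equiv_mipo} and computing the minimal polynomials of $\alpha/2$ and $(\alpha+1)/2$, it simply manipulates the relation $\alpha^2-m\alpha-2=0$ to write down explicit M\"obius transformations:
\[
    \frac{\alpha}{2}=\frac{1}{\alpha-m}=\frac{0\cdot\alpha+1}{1\cdot\alpha-m},
    \qquad
    \frac{\alpha+1}{2}=\frac{\tfrac{m+1}{2}\,\alpha+1}{1\cdot\alpha+0},
\]
each with determinant $-1$; the oddness of $m$ enters only to make $(m+1)/2$ an integer. Your approach amounts to the same thing unpacked through the minimal-polynomial criterion: \Cref{lem:equiv_mipo} is precisely the translation between ``there is a unimodular M\"obius map sending $\beta$ to $2\beta$'' and the arithmetic conditions you verify. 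The paper's argument is shorter because it stays on the M\"obius side, while yours is a bit more systematic and shows how one might \emph{search} for the transformation (your discriminant remark for finding $d$ is a nice touch that the paper does not include).
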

\begin{proof}
The equation $\alpha^2 - m\alpha - 2 = 0$ implies
\[
    \frac{\alpha}{2} = \frac{1}{\alpha-m} = \frac{0 \cdot \alpha + 1}{1 \cdot \alpha - m}
    \quad \text{and} \quad
    \frac{\alpha + 1}{2} = \frac{\frac{m+1}{2} \alpha + 1 }{1 \cdot \alpha + 0},
\]
and since $m$ is odd, we have $\alpha/2 \sim \alpha$ and $(\alpha+1)/2 \sim \alpha$ by definition.
\end{proof}

\begin{lem}\label{lem:alpham_shape}
Let $m\geq 3$ be an odd integer and let $\alpha$ be the positive root of the polynomial $x^2 - mx - 2$, i.e.,
\[
    \alpha = \frac{m + \sqrt{m^2 + 8}}{2}.
\]
Then the continued fraction expansion of $\alpha$ is of the shape
\[
    \alpha = [\overline{m; a_1, \ldots, a_n}],
\]
where $a_1 \cdots a_n$ is a palindrome and $a_i \leq m$ for $1 \leq i \leq n$.
\end{lem}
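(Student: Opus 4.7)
The plan is to combine pure periodicity of $\alpha$, the palindrome structure from \Cref{lem:cf_palindrome}, and an analysis of the complete quotients via Perron's representation (\Cref{lem:Perron}).

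First, since $\sqrt{m^2+8}>m$ and $\sqrt{m^2+8}<m+2$ (the latter being equivalent to $m>1$), we have $\alpha\in(m,m+1)$ and $\overline{\alpha}=(m-\sqrt{m^2+8})/2\in(-1,0)$. By \Cref{lem:purely_periodic}, $\alpha$ is purely periodic and $\floor{\alpha}=m$. Moreover, $\alpha$ is both a quadratic irrational and an algebraic integer, because its minimal polynomial $x^2-mx-2$ is monic. Hence \Cref{lem:cf_palindrome} applies and yields $\alpha=[m;\overline{c_1,\ldots,c_L}]$ with $c_1\cdots c_{L-1}$ a palindrome and $c_L=2m-(\alpha+\overline{\alpha})=m$. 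Since $c_L=m$, the expansion may be rewritten as $\alpha=[\overline{m;c_1,\ldots,c_{L-1}}]$, which is the claimed form with $n=L-1$ and $a_i=c_i$.

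It remains to show $a_i\leq m$. Writing the complete quotients in Perron's form $\alpha_i=(R_i+\sqrt{D})/S_i$ with $D=m^2+8$, $1\leq R_i\leq\floor{\sqrt{D}}$, and $S_{i-1}S_i=D-R_i^2$, the crucial step is to prove $S_i\geq 2$ for every $i$ in the cycle. Suppose $S_i=1$. The pure-periodicity inequality $-1<\overline{\alpha_i}<0$ becomes $\sqrt{D}-R_i<1$, which forces $R_i=\floor{\sqrt{D}}$ and thus $\alpha_i=\floor{\sqrt{D}}+\sqrt{D}$. Now $\floor{\sqrt{m^2+8}}=m$ for $m\geq 4$, while $\floor{\sqrt{17}}=4$ for $m=3$, so in either case $\alpha_i$ and $2\alpha=m+\sqrt{D}$ differ by an integer, giving $\alpha_i\sim 2\alpha$. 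On the other hand, as a complete quotient of $\alpha$ the number $\alpha_i$ shares a tail of its continued fraction expansion with $\alpha$, hence $\alpha_i\sim\alpha$. Combining these, $\alpha\sim 2\alpha$; but \Cref{lem:equiv_mipo} applied to this equivalence (with the lemma's multiplier equal to $2$ and leading coefficient $A=1$) gives via \eqref{eq:A} that $\ell=2c$, contradicting $\gcd(\ell,2)=1$. Hence $S_i\geq 2$.

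Once $S_i\geq 2$ is established, the bound $R_i\leq m$ follows: for $m\geq 5$ immediately from $\floor{\sqrt{D}}=m$, and for $m=3$ because the only additional possibility $R_i=4$ would force $S_{i-1}S_i=D-16=1$ and hence $S_i=1$, which was just excluded. Combining $R_i\leq m$ with $S_i\geq 2$ gives
\[
\alpha_i=\frac{R_i+\sqrt{D}}{S_i}\leq\frac{m+\sqrt{D}}{2}=\alpha<m+1,
\]
so $a_i=\floor{\alpha_i}\leq m$. The delicate step is excluding $S_i=1$; the plan sidesteps a direct $2$-adic analysis of the sequence $(S_i)$ by reducing the issue to the simple non-equivalence $\alpha\not\sim 2\alpha$, which holds precisely because $\alpha$ is an algebraic integer.
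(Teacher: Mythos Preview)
Your proof is correct and follows essentially the same route as the paper: pure periodicity, the palindrome structure from \Cref{lem:cf_palindrome}, and Perron's representation, with the crux being that $S_i=1$ would force $\alpha\sim 2\alpha$ and hence a contradiction.

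Two minor differences are worth noting. First, the paper disposes of $m=3$ by direct computation at the outset and then works only with $m\geq 5$, where $\floor{\sqrt{D}}=m$; you instead carry $m=3$ through the general argument with small case splits, which is equally valid. Second, for the contradiction $\alpha\not\sim 2\alpha$ the paper appeals to \Cref{lem:2outof3} together with \Cref{lem:alpham_equiv}, whereas you use \Cref{lem:equiv_mipo} directly (with $A=1$, forcing $\ell=2c$ against $\gcd(\ell,2)=1$). Your route is arguably cleaner here, since \Cref{lem:equiv_mipo} is purely algebraic and does not rely on the algorithm-based \Cref{lem:notcollide} that underlies \Cref{lem:2outof3}.
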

\begin{proof}
First, note that for $m=3$ it is easy to check that $\alpha = [\overline{3;1,1}]$, which is of the correct shape. Now assume $m\geq 5$. Then since $m^2 + 8$ cannot be a square, $\alpha$ is irrational. Since it is by definition an algebraic integer, \Cref{lem:cf_palindrome} gives us the shape $\alpha = [a_0; \overline{a_1, \ldots, a_n}]$, where $a_1 \cdots a_{n-1}$ is a palindrome and $a_n = 2a_0 - (\alpha + \overline{\alpha})$.
Moreover, we have $m < \sqrt{m^2 + 8} < m+1$, and so $a_0 = \floor{\alpha} = m$ and $2a_0 - (\alpha + \overline{\alpha}) = 2m - m = m$. Thus, $\alpha$ is of the correct shape.
We only need to show that $a_i \leq m$ for $1 \leq i \leq n-1$. 

Assume towards a contradiction that $a_i \geq m+1$ for some $1 \leq i \leq n-1$. By \Cref{lem:Perron} we have
\[
    \alpha_i 
    = [a_i; a_{i+1}, \ldots, a_{n-1}, m, a_1, \ldots a_{i-1}]
    = \frac{R_i + \sqrt{m^2 + 8}}{S_i}
\]
for some integers $0 \leq R_i \leq m$ and $1 \leq S_i \leq 2m$.
But then
\[
    m+1  
    < \alpha_i 
    \leq \frac{m + \sqrt{m^2 + 8}}{S_i}
    < \frac{2m +1}{S_i},
\]
which implies $S_i = 1$. 
Since $\alpha_i$ is purely periodic as well, we have $-1 < \overline{\alpha_i} < 0$ by  \Cref{lem:purely_periodic}. This yields $R_i=m$, i.e,
\[
    \alpha_i 
    =m + \sqrt{m^2 + 8}
    = 2 \alpha.
\]
In particular, this implies $\alpha \sim 2\alpha$, which is impossible by \Cref{lem:2outof3} as we already have $\alpha \sim \alpha/2 \sim (\alpha + 1)/2$ from \Cref{lem:alpham_equiv}.
\end{proof}

Now we immediately get the next theorem.

\begin{thm}\label{thm:B-counterex-m}
For every odd $m\geq 3$ and for every $K\geq 0$ there exists an $\alpha_{m,K}$ such that $B(2^k \alpha_{m,K}) = m$ holds for all $0\leq k \leq K$.
\end{thm}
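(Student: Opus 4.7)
The proof should essentially be a direct assembly of the preceding three lemmas, together with the basic fact that equivalent numbers have the same limsup of partial quotients.

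The plan is as follows. First, I would fix odd $m\geq 3$ and let $\alpha$ denote the positive root of $x^2 - mx - 2$. By \Cref{lem:alpham_equiv} we have $\alpha \sim \alpha/2 \sim (\alpha+1)/2$. This puts us exactly in the hypothesis of \Cref{lem:long_equ_blocks}, so for the given $K$ I can produce a $\beta$ with $\beta \sim 2\beta \sim 4\beta \sim \cdots \sim 2^K \beta \sim \alpha$. I would then set $\alpha_{m,K} := \beta$.

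Next I need to show $B(2^k \alpha_{m,K}) = m$ for every $0\leq k\leq K$. The key observation is that equivalent irrationals share the same tail of the continued fraction expansion, and hence have the same value of $B(\cdot)$. Since each $2^k \beta$ is equivalent to $\alpha$, it suffices to compute $B(\alpha)$. By \Cref{lem:alpham_shape}, $\alpha$ has a purely periodic continued fraction expansion $[\overline{m; a_1, \ldots, a_n}]$ with $a_i \leq m$ for all $i$. Because the digit $m$ occurs in every period and no digit in the period exceeds $m$, we get $B(\alpha) = m$. Therefore $B(2^k \alpha_{m,K}) = m$ for all $0 \leq k \leq K$, which is what we wanted.

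I do not anticipate any significant obstacle: all three lemmas (\ref{lem:alpham_equiv}, \ref{lem:alpham_shape}, \ref{lem:long_equ_blocks}) have already been proved, and the only remaining task is to record the one-line consequence that $\beta \sim \alpha$ forces $B(\beta) = B(\alpha)$, which follows immediately from the characterization of equivalence in terms of eventually identical continued fraction tails recalled in \Cref{sec:cf}. The whole proof should fit in a few lines.
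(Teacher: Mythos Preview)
Your proposal is correct and follows essentially the same approach as the paper: invoke \Cref{lem:alpham_equiv} to verify the hypothesis of \Cref{lem:long_equ_blocks}, obtain $\beta$ with $2^k\beta \sim \alpha$ for $0\le k\le K$, and then use \Cref{lem:alpham_shape} to conclude $B(\alpha)=m$, hence $B(2^k\beta)=m$. The paper's proof is even terser and leaves implicit the observation that $B$ is an equivalence invariant, which you correctly spell out.
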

\begin{proof}
We can set $\alpha_{m,0}$ to be the positive root of $x^2 - mx - 2$. By \Cref{lem:alpham_shape} we have $B(\alpha_{m,0}) = m$, and by Lemmas~\ref{lem:alpham_equiv} and \ref{lem:long_equ_blocks} for every $K$
we can construct a suitable $\alpha_{m,K}$ from $\alpha_{m,0}$.
\end{proof}

\begin{rem}
Recall that in the proof of \Cref{thm:M-bound} we actually did the following (see \Cref{rem:M-K}): For each $1 \leq C \leq 14$, we found a $K_C$ so that we were able to prove that for every $\alpha$ there exists a $0\leq k \leq K_C$ such that $M(2^k \alpha) > C$. 
In view of \Cref{thm:B-counterex-m}, we will not be able to achieve this in the $B$-setting for $C\geq 3$.
\end{rem}

\section{Proving the $B$-bound}\label{sec:B-bound}

In \Cref{thm:M-bound} we proved the lower bound $\sup_{k\geq 0} M(2^k \alpha) > 14$.
The goal of the present section is to prove a similar bound $\sup_{k\geq 0} B(2^k \alpha) > C$, with $C$ as large as we can manage. Since the approach used to prove the $M$-bound does not transfer to $B$, we require a more intricate strategy.

We start by proving the smallest lower bound $\sup_{k\geq 0} B(2^k \alpha) > 1$.

\begin{lem}\label{lem:1}
If $\alpha \sim [0;\overline{1}]$, then $2\alpha \sim [0;\overline{4}]$.
\end{lem}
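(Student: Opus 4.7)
The plan is to apply Algorithm~\ref{algo:2algo} directly. Since $\alpha \sim [0;\overline{1}]$, the continued fraction expansion of $\alpha$ shares its tail with $[0;\overline{1}]$: there exists some $N$ such that $\alpha = [a_0; a_1, \ldots, a_N, 1, 1, 1, \ldots]$. Because the sliding window advances by at least one index per step, after finitely many steps the window lies entirely within the tail of $1$'s, and from that point on only two window shapes can occur, namely $(1,1,1)$ and $(0,1,1)$ (no other shape is possible because the partial quotients are all $1$, and the algorithm either looks at the raw triple or subtracts $1$ from its first entry).

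I would then trace the algorithm on these two shapes. From a window $(1,1,1)$, whose first entry is odd, it records $0,1,1$ and slides to $(b-1,c,d) = (0,1,1)$; from a window $(0,1,1)$, whose first entry is even, it records $0,2$ and slides to $(c,d,e) = (1,1,1)$. Hence the algorithm settles into a length-two cycle, and the tail of the recorded sequence of digits is the periodic string $\ldots, 0,1,1,0,2,0,1,1,0,2, \ldots$ Applying the clean-up rule ``$x,0,y \mapsto x+y$'' from left to right and segmenting the periodic tail as $\ldots \mid 1,0,2,0,1 \mid 1,0,2,0,1 \mid \ldots$, each length-five block collapses to the single digit $4$ in two steps (first $1,0,2 \to 3$, then $3,0,1 \to 4$). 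Thus the continued fraction expansion of $2\alpha$ has a tail consisting only of $4$'s, which is exactly the statement $2\alpha \sim [0;\overline{4}]$.

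I do not anticipate a serious obstacle; the argument is a direct computation inside Algorithm~\ref{algo:2algo}. The only spot needing a sentence of care is the transition from the pre-periodic prefix of the recorded sequence to its periodic tail: a late prefix digit could be absorbed by the clean-up into the first few periodic digits, but this alters only finitely many partial quotients of $2\alpha$ and so does not affect the equivalence conclusion.
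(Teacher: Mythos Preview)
Your argument is correct. The trace of the two-window cycle $(1,1,1)\to(0,1,1)\to(1,1,1)$ and the clean-up of the periodic block $1,0,2,0,1$ to a single $4$ are exactly right, and your remark that the prefix can only perturb finitely many output digits handles the boundary between pre-period and period.

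The paper takes a slightly different, more structural route: rather than running the algorithm on a generic $\alpha$ with tail $\overline{1}$, it sets $\beta=[0;\overline{1}]=(\sqrt{5}-1)/2$ and invokes \Cref{rem:windows} and \Cref{lem:notcollide} to say that the algorithm, upon reaching the shared tail, must hit it in one of the three ways that correspond to computing $2\beta$, $\beta/2$, or $(\beta+1)/2$; hence $2\alpha$ is equivalent to one of these three concrete numbers, each of which is then checked to be $\sim[0;\overline{4}]$. Your direct trace effectively recomputes two of these three branches by hand (and implicitly the third, since a ``skip'' at the first tail digit lands you in Case~1 at the next window). The paper's packaging is shorter and foreshadows the pattern used in the later \Cref{lem:B2,lem:B3,lem:B4}; your version is more self-contained and does not need \Cref{lem:notcollide}.
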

\begin{proof}
From \Cref{rem:windows} and \Cref{lem:notcollide} we know that if $\alpha \sim [0;\overline{1}] = (\sqrt{5}-1)/2 =:\beta$, then $2\alpha$ is equivalent to one of $2\beta, \beta/2, (\beta+1)/2$. It is easy to check that all three numbers are equivalent to $[0;\overline{4}] = \beta/2$.
\end{proof}

\Cref{lem:1} implies that there exists no $\alpha$ with $B(\alpha) \leq 1$ and $B(2\alpha) \leq 1$. In particular, there cannot exist a B$2$LC counterexample with constant $C=1$.

If we set $C=2$, however, this is slightly less obvious. Indeed, we have
\[
    2 \cdot [0;1,\overline{2}] = 2 \cdot \frac{\sqrt{2}}{2} = [1;\overline{2}].
\]
In view of this example, it is natural to ask whether there exist other numbers $\alpha$ different from  $\sqrt{2}/2$ which have the property $B(\alpha) \leq 2$ and $B(2\alpha) \leq 2$.
It turns out that there exist infinitely many such numbers. However, each such number is in one of two equivalence classes. They are quadratic irrationals, and we are able to characterize them. We remark that we could skip this step on the way to proving $\sup_{k\geq 0} B(2^k \alpha) > 2$. However, the next theorem may be interesting in its own right.

\begin{thm}
\label{theo:characterB2}
Let $\alpha$ be an irrational real number. Then $B(\alpha) \leq 2$ and $B(2\alpha) \leq 2$ if and only if the continued fraction of $\alpha$ is of one of the following two shapes:
\begin{align}
    \alpha &= [a_0; a_1, \ldots, a_n, \overline{2}] 
        \quad \text{and} \quad
        q_{n} \equiv q_{n-1} \equiv 1 \pmod{2}, \label{eq:B2-2}\\
    \alpha &= [a_0; a_1, \ldots, a_n, \overline{2,1}] 
        \quad \text{and} \quad
        q_{n-1} \equiv 0 \pmod{2}. \label{eq:B2-21}
\end{align}
\end{thm}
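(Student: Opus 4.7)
My plan is to analyze \Cref{algo:2algo} applied to $\alpha$ and to classify each index $m \geq 1$ by its ``state'' $(c, a_m)$, where $c \in \{1, 2, 3\}$ is the Case described by \Cref{lem:parities}; Case~3 indices are never visited directly by the algorithm but contribute a recorded digit $2 a_m$ to the preceding Case~1 visit. The parity conditions in the Theorem translate, via \Cref{lem:parities}, directly into conditions on the state at index $n+1$.

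For the backward direction, if $\alpha$ has Form~\eqref{eq:B2-2} then the recursion $q_{m+1} = 2 q_m + q_{m-1}$ together with the period $\overline{2}$ and the hypothesis $q_n \equiv q_{n-1} \equiv 1 \pmod{2}$ keeps $q_j$ odd for every $j \geq n-1$. Hence at every visited $m \geq n+1$ the state is $(\mathrm{C2}, 2)$, which records $0, 1, 1$, and the left-to-right cleanup $\ldots, X, 0, 1, 1, 0, 1, 1, 0, 1, 1, \ldots \to \ldots, X+1, 2, 2, 2, \ldots$ yields $2\alpha$ with tail $\overline{2}$. For Form~\eqref{eq:B2-21}, $q_{n-1}$ even combined with period $\overline{2, 1}$ forces $q_{n-1}, q_n, q_{n+1}, \ldots$ to have parities $\mathrm{E}, \mathrm{O}, \mathrm{E}, \mathrm{O}, \ldots$; every visited $m \in \{n+1, n+3, \ldots\}$ is then in state $(\mathrm{C1}, 2)$ with $a_{m+1} = 1$ and records $1, 2$, so $2\alpha$ has tail $\overline{1, 2}$. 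The bound $B(\alpha) \leq 2$ is immediate in both cases.

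For the forward direction, assume $B(\alpha), B(2\alpha) \leq 2$. By \Cref{lem:noofdigitsproduced} I pick $N$ so that, for all $m \geq N$, $a_m \in \{1, 2\}$ and every cleaned-up digit produced by the algorithm from $a_N$ onward is a partial quotient of $2\alpha$ in $\{1, 2\}$. A direct enumeration of the recordings forbids from some point on: state $(\mathrm{C3}, 2)$ (records $4$); $(\mathrm{C1}, 2)$ with $a_{m+1} = 2$ (records $1, 4$); $(\mathrm{C2}, 1)$ with $a_{m+1} = 2$ (records $0, 4$); and $(\mathrm{C2}, 1)$ with $a_{m+1} = 1$ (records $0, 2$, whose cleanup $X, 0, 2 \to X+2$ gives a value $\geq 3$ since $X \geq 1$). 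Hence $(\mathrm{C2}, 1)$ is wholly forbidden, and each visited state lies in $\{(\mathrm{C1}, 1), (\mathrm{C1}, 2), (\mathrm{C2}, 2)\}$.

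\Cref{lem:parities} now determines the next visited state uniquely from the current one. After pruning by the forbidden states above and by the extra cleanup check that rules out $(\mathrm{C1}, 2) \to (\mathrm{C1}, 1)$---since the preceding $(\mathrm{C1}, 2)$ ends in $2$ and the cleanup $2, 0, 1 \to 3$ would then exceed $2$---only the self-loops $(\mathrm{C1}, 2) \to (\mathrm{C1}, 2)$ and $(\mathrm{C2}, 2) \to (\mathrm{C2}, 2)$ together with the feeder edge $(\mathrm{C1}, 1) \to (\mathrm{C2}, 2)$ survive. Hence for $m$ sufficiently large, either every visited state is $(\mathrm{C2}, 2)$, forcing $a_m = 2$ for all such $m$ and $q_{m-2} \equiv q_{m-1} \equiv 1 \pmod{2}$, i.e.\ Form~\eqref{eq:B2-2}; or every visited state is $(\mathrm{C1}, 2)$, forcing the period $\overline{2, 1}$ and $q_{m-2} \equiv 0 \pmod{2}$, i.e.\ Form~\eqref{eq:B2-21} with $n+1 = m$. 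The main technical obstacle is the careful bookkeeping of the cleanup across step boundaries: a leading $0$ of one recording merges with the trailing digit of the previous step, and it is precisely this coupling that both forbids $(\mathrm{C2}, 1)$ outright and prunes $(\mathrm{C1}, 2) \to (\mathrm{C1}, 1)$.
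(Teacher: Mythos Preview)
Your proposal is correct and takes essentially the same approach as the paper: both arguments run \Cref{algo:2algo} past a threshold index and perform a finite case analysis on what the algorithm sees, with the paper organizing the cases by the visited block $(a,b,c)$ and you organizing them by the equivalent data $(\text{Case from \Cref{lem:parities}},\,a_m)$. Your explicit pruning of the transition $(\mathrm{C1},2)\to(\mathrm{C1},1)$ via the cleanup $2,0,1\to 3$ is a detail the paper's Case~5 treatment leaves implicit, but the two analyses collapse to the same two surviving loops and the same parity conclusions.
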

\begin{proof}
Let $\alpha = [a_0;a_1, a_2, \ldots]$ and $2\alpha = [b_0; b_1, b_2, \ldots]$, and assume that $B(\alpha) \leq 2$ and $B(2\alpha) \leq 2$. Then there exists an index $M$ such that $a_i, b_i \leq 2$ for all $i \geq  M$. We investigate what happens as our algorithm for multiplication by two ``slides across'' $a_{3M+1}, a_{3M+2}, \ldots$. 
By \Cref{lem:noofdigitsproduced}, the digits produced from  $a_{3M+1}, a_{3M+2}, \ldots$ correspond to digits $b_i$ with $i \geq M$. Thus, these must be bounded by $2$.

We now systematically look at all blocks $(a,b,c)$, which might be considered in the process. We assume that the digit $c$ is followed by $d,e$. Note that since $a_i \in \{1,2\}$ for all $i \geq M$, by the rules of the algorithm, we always have $a \in \{0,1,2\}$ and $b,c,d,e \in \{1,2\}$.

\begin{enumerate}[label = \textit{Case \arabic*:}, ref= \arabic*]

\item\label{c:0} $(a,b,c) = (0,b,c)$. We record the digits $0$ and $2b$, and move on to $(c, d, e)$. Since $2b\geq 2$, we obtain a digit $\geq 3$ after cleaning up the produced zero, a contradiction.

\item\label{c:11} $(a,b,c) = (1,1,c)$. We record the digits $0$, $1$ and $1$, and move on to $(0,c,d)$. But then we get a contradiction in the next step (cf., Case~\ref{c:0}).

\item\label{c:121} $(a,b,c) = (1,2,1)$. We record the digits $0$, $1$ and $1$, and move on to $(1,1,d)$. This leads us to land in Case~\ref{c:11} which again culminates in a contradiction.

\item\label{c:122} $(a,b,c) = (1,2,2)$. We record the digits $0$, $1$ and $1$, and move on to $(1,2,d)$. To avoid the contradiction in Case~\ref{c:121}, the next digit $d$ must be a $2$. Hence, in the next step we are again in the same situation that $(a,b,c) = (1,2,2)$. Using this argument inductively, we see that all digits after this point must be equal to 2. Thus, $a_i = 2$ for all $i$ sufficiently large. Regarding the output, we record the digits $\overline{0,1,1,0,1,1}$ which after the clean-up become $0,1,\overline{2}$.

\item\label{c:211} $(a,b,c) = (2,1,1)$. We record the digits $1$ and $2$, and move on to $(1, d, e)$. This means we end up in one of the Cases \ref{c:11}, \ref{c:121}, \ref{c:122}. Cases~\ref{c:11} and \ref{c:121} yield a contradiction. Case~\ref{c:122} gives $a_i = 2$ for all $i$ sufficiently large.

\item\label{c:22} $(a,b,c) = (2,2,c)$. We record the digits $1$ and $4$, and move on to $(c, d, e)$. Thus, we have recorded the digit $4$, a contradiction.

\item\label{c:212} $(a,b,c) = (2,1,2)$. We record the digits $1$ and $2$, and move on to $(2,d,e)$. This means we continue with one of the Cases~\ref{c:211}, \ref{c:22}, \ref{c:212}. Case~\ref{c:22} yields a contradiction; Case~\ref{c:211} either yields a contradiction or $a_i = 2$ for all $i$ sufficiently large. This means that we either get a contradiction, or $a_i = 2$ for all $i$ sufficiently large, or we keep coming back to Case~\ref{c:212}. The latter implies that for $i$ sufficiently large, the digits $a_i$ follow the pattern $\overline{2,1}$. Regarding the output, we record the digits $\overline{1,2}$.

\end{enumerate}

Overall, we see that we either get a contradiction, or eventually stay in Case~\ref{c:122}, or eventually stay in Case~\ref{c:212}. 

Staying in Case~\ref{c:122} corresponds to $\alpha$ and $2\alpha$ being equivalent to $[0;\overline{2}]$. Moreover, if we have $\alpha = [a_0; a_1, \ldots, a_n, \overline{2}]$, the algorithm has to reach the block $(a_{n+1}-1, a_{n+2},a_{n+3}) = (1,2,2)$. By \Cref{lem:parities}, $q_n$ and $q_{n-1}$ must be odd.

Saying we eventually stay in Case~\ref{c:212} amounts to saying that $\alpha$ and $2\alpha$ are equivalent to $[0;\overline{2,1}]$.
If $\alpha = [a_0; a_1, \ldots, a_n, \overline{2,1}]$, we have to reach the block $(a_{n+1}, a_{n+2},a_{n+3}) = (2,1,2)$. By \Cref{lem:parities}, $q_n$ must be odd and $q_{n-1}$ must be even.

Thus, we have proven that if $B(\alpha)\leq 2$ and $B(2\alpha)\leq 2$, then $\alpha$ must be of one of the two shapes \eqref{eq:B2-2}, \eqref{eq:B2-21}. 
Our arguments and \Cref{lem:parities} show that the converse holds as well.
\end{proof}

Next, we prove that we cannot have $B(\alpha)\leq 2$ and $B(4\alpha) \leq 2$.

\begin{lem}\label{lem:B2}
For every irrational real $\alpha$ we have $B(\alpha/2) \geq 3$ or $B(2\alpha) \geq 3$ (or both).
\end{lem}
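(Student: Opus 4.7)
We argue by contradiction: suppose both $B(\alpha/2) \leq 2$ and $B(2\alpha) \leq 2$. The plan has three stages: first show that these two assumptions together force $B(\alpha) \leq 2$ as well; then apply \Cref{theo:characterB2} to each of the pairs $(\alpha/2, \alpha)$ and $(\alpha, 2\alpha)$; and finally rule out every resulting shape combination.

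For the first stage, assume towards contradiction that $a_n(\alpha) \geq 3$ for some index $n$. By \Cref{rem:proof_parities}, the condition $B(2\alpha) \leq 2$ forces the convergent denominator $q_{n-1}$ of $\alpha$ to be odd; otherwise $2\alpha$ would have a partial quotient of size at least $2a_n - 2 \geq 4$. Applying the same reasoning to the $2$-multiplication algorithm on $1/\alpha = [0; a_0, a_1, \ldots]$ (which computes $\alpha/2$ up to an irrelevant leading zero), the relevant convergent denominator turns out to be $p_{n-1}$ of $\alpha$, and $B(\alpha/2) \leq 2$ forces $p_{n-1}$ to be odd as well. So $a_n \geq 3$ implies that both $p_{n-1}$ and $q_{n-1}$ are odd. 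If additionally $a_{n-1} \geq 3$, then $p_{n-2}$ and $q_{n-2}$ are both odd too, and the standard convergent identity $p_{n-1}q_{n-2} - p_{n-2}q_{n-1} = (-1)^{n}$ becomes $0 \equiv 1 \pmod 2$, a contradiction. Hence two consecutive partial quotients of $\alpha$ cannot both be $\geq 3$; ruling out infinitely many isolated large ones requires a more detailed case analysis of both $2$-multiplication algorithms, in the spirit of the proof of \Cref{theo:characterB2}, and this is the part of the argument we expect to be most technical.

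Once $B(\alpha) \leq 2$ is established, \Cref{theo:characterB2} applied to $(\alpha/2, \alpha)$ and to $(\alpha, 2\alpha)$ forces each of $\alpha/2$ and $\alpha$ into one of the shapes \eqref{eq:B2-2} or \eqref{eq:B2-21}. Three of the four combinations fail immediately: two of them would place $\alpha$ and $\alpha/2$ in incompatible quadratic fields ($\mathbb{Q}(\sqrt{2})$ versus $\mathbb{Q}(\sqrt{3})$) under the relation $\alpha = 2(\alpha/2)$, and the remaining problematic combination would require $\sqrt{2}-1 \sim 2(\sqrt{2}-1)$, which \Cref{lem:equiv_mipo} rules out. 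The last case—both $\alpha$ and $\alpha/2$ equivalent to $(\sqrt{3}-1)/2$—is consistent at the level of equivalence classes (since $(\sqrt{3}-1)/2 \sim \sqrt{3}-1$ by \Cref{lem:equiv_mipo}), and we finish by checking directly that the parity condition $q_{n-1} \equiv 0 \pmod{2}$ of \eqref{eq:B2-21} cannot hold simultaneously for the preperiods of $\alpha$ and of $\alpha/2$: explicit computation via the $2$-multiplication algorithm of \Cref{sec:algo} shifts the parity of the last preperiod convergent denominator in the wrong direction, yielding the final contradiction.
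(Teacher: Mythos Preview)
Your proposal has a genuine gap, and it also misses the key simplifying idea used in the paper.

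The gap is in your first stage. You argue that two consecutive partial quotients of $\alpha$ cannot both be $\geq 3$, but you then explicitly defer the harder part --- ruling out infinitely many isolated partial quotients $\geq 3$ --- to ``a more detailed case analysis \ldots\ in the spirit of the proof of \Cref{theo:characterB2}''. That case analysis is not carried out, and it is not clear it would be any easier than the lemma itself. Your third stage has a similar issue: in the final case (both $\alpha$ and $\alpha/2$ of shape~\eqref{eq:B2-21}) you assert that the parity conditions on the preperiod convergent denominators are incompatible, but you do not verify this; the claim that the algorithm ``shifts the parity \ldots\ in the wrong direction'' is not substantiated.

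More importantly, the whole detour through $B(\alpha)\leq 2$ and \Cref{theo:characterB2} is unnecessary. The paper's proof is a single paragraph and uses only $B(\alpha)\geq 2$ (the case $B(\alpha)=1$ being dispatched by \Cref{lem:1}). The key observation you are missing is \Cref{lem:notcollide}: the algorithms computing $2\alpha$ and $\alpha/2$ from the partial quotients of $\alpha$ reach every window $(a_n,a_{n+1},a_{n+2})$ in two \emph{distinct} ways among the three possibilities of \Cref{rem:windows}. At any window with $a_n\geq 2$, the ``skip'' option is ruled out (it would record $2a_n\geq 4$), so one algorithm sees $(a_n,b,c)$ and the other $(a_n-1,b,c)$. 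The even one forces $b=1$; the odd one then lands on $(0,c,d)$ and, after clean-up, produces a digit $\geq 3$. That is the entire argument --- no need to bound $B(\alpha)$ from above, no appeal to \Cref{theo:characterB2}, and no case analysis on equivalence classes.
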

\begin{proof}
Let $\alpha = [a_0; a_1, a_2, \ldots]$.
Note that if $B(\alpha) = 1$, then $\alpha \sim [0;\overline{1}]$, and we are done by \Cref{lem:1}. Therefore, we may assume $B(\alpha)\geq 2$.

By \Cref{lem:notcollide}, we can compute the digits of $\alpha/2$ and $2\alpha$ by applying \Cref{algo:2algo} to $1/\alpha$ and $\alpha$, respectively. 

Aiming towards a contradiction, assume that $B(\alpha/2)\leq 2$ and $B(2\alpha)\leq 2$. This means that for sufficiently large $n$, as we slide across $(a_n,a_{n+1},a_{n+2})$, both algorithms must produce digits $\leq 2$. Since $B(\alpha)\geq 2$, there exist arbitrarily large indices $n$ such that the window $(a,b,c)=(a_n,a_{n+1},a_{n+2})$ satisfies $a = a_n \geq 2$.
From \Cref{lem:notcollide}, we know that our two algorithms will reach the window $(a,b,c)$ in two distinct ways.

If one of the two algorithms skips $(a,b,c)$, it must have recorded $2a\geq 4$ in the previous step, a contradiction.
Therefore, one of them must reach $(a,b,c)$, and the other one reaches $(a-1,b,c)$. 
Since either $a$ or $a-1$ is even, one of the two algorithms will record $\floor{a/2}$ and $2b$. This implies $b = 1$.
Moreover, either $a$ or $a-1$ is odd. Hence, the other algorithm will record $\floor{(a-1)/2}, 1, 1$ and move on to $(b-1,c,d) = (0,c,d)$. Then it will record $0$ and $2c\geq 2$. After cleaning up the produced $0$, this becomes $\geq 3$, a contradiction.
\end{proof}

In particular, \Cref{lem:B2} implies that there does not exist a B$2$LC counterexample with constant $2$. We proceed to exclude the constant $3$.

\begin{lem}\label{lem:B3}
Let $\alpha$ be a real irrational with $B(4 \alpha) = 3$ and $4\alpha \not\sim [\overline{3;1,1}]$. Then $B(2^{k} \alpha) \geq 4$ for at least one of $k \in \{0,1,3,4\}$.
\end{lem}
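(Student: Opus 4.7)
The plan is to argue by contradiction. Suppose $B(2^k\alpha)\leq 3$ for all $k\in\{0,1,3,4\}$; together with the hypothesis $B(4\alpha)=3$, this forces $B(\alpha), B(2\alpha), B(4\alpha), B(8\alpha), B(16\alpha)\leq 3$. Setting $\beta:=4\alpha$, we need to control $\beta/4,\beta/2,\beta,2\beta,4\beta$ simultaneously, and the goal is to show that these constraints force $\beta\sim[\overline{3;1,1}]$, contradicting the hypothesis $4\alpha\not\sim[\overline{3;1,1}]$.

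The proof follows the general strategy of \Cref{theo:characterB2} and \Cref{lem:B2}: a window-by-window case analysis of \Cref{algo:2algo}. For sufficiently large indices, the partial quotients of $\beta$ lie in $\{1,2,3\}$. By \Cref{lem:notcollide}, the three algorithms computing $2\beta$, $\beta/2$, and $(\beta+1)/2$ hit each window $(a,b,c)$ of consecutive partial quotients of $\beta$ in the three distinct cases from \Cref{rem:windows}. Since $B(2\beta),B(\beta/2)\leq 3$, in at least two of the three cases the digits produced (after the clean-up step) must be $\leq 3$. One enumerates all windows with $a\in\{0,1,2,3\}$, $b,c\in\{1,2,3\}$, determines for each the digits recorded in each of the three cases, and eliminates those window-and-case combinations that already violate the bound. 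Following the sliding rules of \Cref{algo:2algo}, together with the parity data of \Cref{lem:parities}, this restricts the admissible tail patterns of $\beta$'s continued fraction to a short list of periodic shapes.

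The second step is to iterate the same analysis: apply it once more to $2\beta$ (using $B(4\beta)\leq 3$) and to $\beta/2$ (using $B(\beta/4)\leq 3$). Each iteration further restricts the admissible patterns. The expected outcome, which must be verified, is that after both layers of restriction only the pattern $\overline{3,1,1}$ (up to a shift) remains, i.e.\ $\beta\sim[\overline{3;1,1}]$. This is consistent with \Cref{thm:B-counterex-m} for $m=3$: the special property $\beta\sim\beta/2\sim(\beta+1)/2$ from \Cref{lem:alpham_equiv} for the root of $x^2-3x-2$ is precisely what allows the pattern to survive multiple multiplications and divisions by $2$ without forcing a partial quotient to exceed $3$.

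The main obstacle is the combinatorial size and bookkeeping of the enumeration. Compared with \Cref{theo:characterB2}, the alphabet $\{1,2,3\}$ yields substantially more window types, and iterating twice in order to reach $4\beta$ and $\beta/4$ multiplies the number of branches further. A systematic organization—most naturally as a tree of sub-cases indexed by windows of $\beta$ and the parities of the adjacent convergent denominators—is needed to ensure completeness. The trickiest part will be excluding alternative ``chain'' patterns distinct from $\overline{3,1,1}$; here \Cref{lem:2outof3} and \Cref{lem:equiv_mipo} are the natural tools, since any surviving tail must in particular satisfy equivalences of the form $\beta\sim\beta/2\sim(\beta+1)/2$, and one has to check that among digit patterns with $B=3$ the only such equivalence class is that of $[\overline{3;1,1}]$.
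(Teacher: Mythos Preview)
Your general direction---set $\beta=4\alpha$ and do a window analysis of \Cref{algo:2algo}---is right, but you are missing the two ideas that make the paper's argument a few lines rather than a large enumeration. First, the paper does \emph{not} enumerate all windows $(a,b,c)$ with $a\in\{0,1,2,3\}$; it uses $B(4\alpha)=3$ to look only at windows with $a_n=3$. There the ``skip'' case already records $2\cdot3=6$, so the two algorithms we actually control (those producing $2\alpha$ and $8\alpha$) must hit the window as $(3,b,c)$ and $(2,b,c)$, and two lines force $b=c=1$. Second, instead of redoing a full analysis on $2\beta$ and $\beta/2$, the paper \emph{bootstraps}: the same one-line argument at the neighbouring levels shows that every $3$ in the tails of $2\alpha$ and $8\alpha$ is also followed by $1,1$. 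But the output of Algorithm~A \emph{is} the expansion of $2\alpha$ or $8\alpha$; tracking Algorithm~A one more step past the block $(3,1,1,d)$ shows it emits $1,3,y$ with $y\ge2$ unless $d=3$. The output-side constraint therefore feeds back to force $d=3$ on the input side, and hence $4\alpha\sim[\overline{3;1,1}]$ immediately.

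Your proposed endgame---invoke \Cref{lem:2outof3} and \Cref{lem:equiv_mipo} because ``any surviving tail must satisfy $\beta\sim\beta/2\sim(\beta+1)/2$''---is not justified and would not work as stated: the hypotheses bound $B$-values, not equivalence classes, so there is no reason a surviving periodic tail must satisfy those equivalences. The paper's proof uses neither lemma here. Your brute-force enumeration might in principle succeed, but you have not carried it out, and the shortcut you propose for closing it is a genuine gap.
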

\begin{proof}
Let $4\alpha = [a_0; a_1, a_2, \ldots]$ with $B(4\alpha) = 3$, and assume that $B(2^k \alpha) \leq 3$ for $k \in \{0,1,2,3,4\}$.
Then, if $n$ is sufficiently large, all digits in $2^{k} \alpha$ for $k \in \{0,1,2,3,4\}$ that are tied to $a_n$ in terms of the algorithms have to be bounded by $3$.
Since $B(4\alpha) = 3$, there exist arbitrarily large indices $n$ such that $(a_n, a_{n+1}, a_{n+2}) = (3,b,c)$.
As in the proof of \Cref{lem:B2}, we know that the algorithms for division and multiplication by $2$ do not ``collide'' at $a_n$. Therefore, when computing $2\alpha$ and $8 \alpha$ from $4\alpha$, exactly two out of the three cases from \Cref{lem:parities} occur.

If either one of the algorithms skips $(3,b,c)$, it recorded $2\cdot 3 = 6$  in the previous step, which is impossible. 
Therefore, the two algorithms reach $(3,b,c)$ in exactly the two following ways:
\begin{itemize}
    \item Algorithm A: We arrive at $(2,b,c)$, record $1, 2b$, and move on to $(c, d, e)$. Since the produced digits are at most $3$, we must have $b = 1$.
    \item Algorithm B: We arrive at $(3,b,c) = (3,1,c)$, record $1,1,1$, and move on to $(0,c,d)$.
    In the next step, we record $0, 2c$ and move on to $(d, e, f)$. Say we record a digit $x$ in the following step. Then taking zeroes which must be cleaned up into consideration, we see that we must have $c = 1$ and $x \geq 1$.
\end{itemize}

Note that we have proven that at each $a_n = 3$ with $n$ sufficiently large, we must have $(a_n, a_{n+1}, a_{n+2}) = (3,1,1)$ in order to avoid producing digits $\geq 4$. 
By extension, the same holds for the digits of $2\alpha$ and $8\alpha$. In other words, in the expansions of $2\alpha, 4\alpha$ and $8\alpha$ eventually every digit $3$ must be followed by $1,1$.

Now let us go back to Algorithm A and see what else it produces.
We have recorded $1,2 = 2b$, and moved on to $(c,d,e) = (1,d,e)$. Then we record $0,1,1$, and move on to $(d-1,e,f)$.
Assume for a moment that $d = 1$ or $d = 2$. In both cases we record a $0$ after that, which means that overall, we record $1,2,0,1,1,0,*$.
After the clean-up, we obtain $1,3,y$ with $y \geq 2$. But now the digits computed by Algorithm A violate the rule that every $3$ in the expansions of $2\alpha$ and $8\alpha$ must be followed by two $1$'s. Therefore, we must have $d = 3$.

We have proven that whenever a digit $3$ appears in $4\alpha$ at a sufficiently large index, it must be followed by $1,1,3$. This means that $4\alpha \sim [\overline{3;1,1}]$.
\end{proof}

We can push this argument one step further.

\begin{lem}\label{lem:B4}
Let $\alpha$ be a real irrational with $B(4\alpha) = 4$. Then $B(2^k \alpha) \geq 5$ for at least one of $k \in \{0,1,3,4\}$.
\end{lem}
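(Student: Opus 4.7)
The plan is to adapt the strategy of \Cref{lem:B3}, pushed further because the weaker constraint $\leq 4$ (in place of $\leq 3$) admits more branching. I write $4\alpha=[a_0;a_1,a_2,\ldots]$ and assume for contradiction that $B(2^k\alpha)\leq 4$ for every $k\in\{0,1,2,3,4\}$. By \Cref{lem:noofdigitsproduced}, once $n$ is large enough, every partial quotient of $2^k\alpha$ that the multiplication-by-$2$ algorithm links to $a_n$ or later is at most $4$. Since $B(4\alpha)=4$, infinitely many indices $n$ satisfy $a_n=4$.

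First I would fix such a large $n$ and apply the two algorithms that compute $2\alpha$ and $8\alpha$ from $4\alpha$. By \Cref{lem:notcollide} and \Cref{lem:parities} they hit $a_n$ in two of the three cases, and neither may skip $(4,b,c)$ (that would record $2\cdot 4=8>4$). So one of them reads $(4,b,c)$ and records $2,2b$, forcing $b\in\{1,2\}$, while the other reads $(3,b,c)$ and records $1,1,1$. Running both algorithms one more window and examining the clean-up under the bound $\leq 4$ should pin down
\[
    (a_n,a_{n+1},a_{n+2}) \in \{(4,1,1),\ (4,2,3)\}.
\]
The same reasoning applied to any $4$ appearing far enough along in $\alpha, 2\alpha, 8\alpha$, or $16\alpha$ gives the same dichotomy, so every such $4$ is followed by $1,1$ or by $2,3$. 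Next I would rule out $(4,2,3)$: in that configuration the even-case algorithm produces the cleaned prefix $\ldots,2,4,1,1,1,\ldots$, so the new $4$ appearing in $2\alpha$ or $8\alpha$ must itself be followed by $1,1$ or $2,3$. Cross-applying this self-consistency condition while pushing both algorithms through the next few windows will force either a digit $\geq 5$ after clean-up or a $4$ whose two successors lie outside $\{(1,1),(2,3)\}$. Hence $(4,2,3)$ cannot occur, and every $4$ in $4\alpha$ is followed by $1,1$.

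Setting $d:=a_{n+3}$, I would then carry out a case analysis parallel to the elimination of $d\in\{1,2\}$ in \Cref{lem:B3}: for each $d\in\{1,2,3\}$, pushing the algorithm past the $(4,1,1)$ block and tracking the clean-up yields either a digit $\geq 5$ or a $4$ lacking an admissible suffix, leaving only $d=4$. Iterating, $4\alpha \sim [\overline{4,1,1}]$. But $[\overline{4,1,1}]$ is the positive root of $2x^2-8x-5$, namely $(4+\sqrt{26})/2$, so $8\alpha = 2\cdot 4\alpha = 4+\sqrt{26} = [9;\overline{10}]$, giving $B(8\alpha)=10$ and contradicting $B(8\alpha)\leq 4$. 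The hard part will be the case analysis itself: because the allowed digits go up to $4$ rather than $3$, ruling out $(4,2,3)$ and each $d\in\{1,2,3\}$ requires following both algorithms through several windows and tracking the branching on the parities of successive partial quotients. The analysis nevertheless terminates, since every produced $4$ is forced into only two admissible contexts, so the subcase tree closes up.
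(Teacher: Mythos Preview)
Your final step contains a genuine error: you conflate ``$4\alpha \sim [\overline{4,1,1}]$'' with ``$4\alpha = [\overline{4,1,1}]$''. The identity $2\cdot[\overline{4,1,1}]=[9;\overline{10}]$ holds only for the specific number $(4+\sqrt{26})/2$; for an arbitrary $4\alpha$ with that tail, the continued fraction of $8\alpha$ depends on how \Cref{algo:2algo} enters the periodic part, and your own earlier constraints pin this down. You showed that neither the $2\alpha$- nor the $8\alpha$-algorithm may \emph{skip} the $(4,\cdot,\cdot)$ windows (a skip records $2\cdot 4=8$). Running the algorithm on the tail $\overline{4,1,1}$ in either non-skip case produces the cleaned period $\overline{2,3,1,1,1,3}$, so $B(2\alpha)=B(8\alpha)=3$, perfectly compatible with the hypothesis. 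It is precisely the excluded skip case that yields $\overline{10}$. So even granting your (only sketched) periodicity claim, no contradiction follows at $k\in\{1,2,3\}$; you would have to chase the structure down to $k\in\{0,4\}$, which you do not do. Separately, the reduction to $\{(4,1,1),(4,2,3)\}$ is not achieved by ``one more window'': for instance $(4,2,2)$ and $(4,2,4)$ survive a single forward step of both algorithms and can only be eliminated via the self-consistency you invoke later.

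The paper's argument is different and much shorter. Instead of pushing forward towards periodicity, it also looks one step \emph{backwards}: writing $z=a_{n-1}$, the algorithm that arrives at $(4,b,c)$ has just recorded $2z$, so $z\le 2$. This forces the other algorithm's previous block to begin with $1$, so it recorded $0,1,1$ before reaching $(3,b,c)$; together with $1,1,1$ there and the $0$ recorded next (since $b-1\le 1$), the cleaned output in $2\alpha$ or $8\alpha$ contains a block $u,1,1,1,w$ with $u,w\ge 2$, with no case split on $b$, $c$ or $d$. Applying the two non-colliding algorithms to \emph{that} block (now passing to $k\in\{0,4\}$) immediately yields a digit $\ge 2u+1\ge 5$. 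The backward glance at $a_{n-1}$ is the idea that replaces your entire forward case tree.
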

\begin{proof}
Let $\alpha = [a_0; a_1, a_2, \ldots]$ with $B(4\alpha) = 4$.
As in the proof of \Cref{lem:B3}, assume that for $n$ sufficiently large, every digit $a_n = 4$ is only tied to digits $\leq 4$.

Then, as in the proofs of \Cref{lem:B2,lem:B3}, we know that the algorithms for division and multiplication by $2$ applied to $4\alpha$ reach the window $(a_n, a_{n+1},a_{n+2}) = (4,b,c)$ in two distinct ways and that neither skips this window.
Therefore, the two algorithms reach $(4,b,c)$ in exactly the two following ways (we write $z = a_{n-1}$):
\begin{itemize}
    \item Algorithm A: We record $*, 2z$, arrive at $(4,b,c)$, record $2, 2b$, and move on to $(c, d, e)$. This implies 
    \[
        z \leq 2 \quad\text{and}\quad b \leq 2.
    \]
    \item Algorithm B: We arrive at $(3,b,c)$ after recording $*, 1, 1$. This means that in the previous block, $(z,4,b)$ or $(z-1,4,b)$, the first entry must have been odd. Since $z \leq 2$, it must have been $1$. This means that we actually recorded $0, 1,1$ before arriving at $(3,b,c)$. After that, we record $1,1,1$ and move on to $(b-1,c,d)$.
    Since $b-1\leq 1$, we next record $0, *$. Overall, we have recorded $0, 1, 1, 1, 1, 1, 0, *$, so after clean-up, we obtain the pattern $u,1,1,1,w$ with $u \geq 2$ and $w \geq 2$.
\end{itemize}

We have proven that the pattern $u,1,1,1,w$ with $u \geq 2$ and $w \geq 2$ must occur in either $2\alpha$ or $8\alpha$. We now look at what the algorithms for multiplication and division by $2$ do to this pattern.

If we skip the window $(1,1,1)$, we arrive at $(1,1,w)$. We record $0,1,1$ and move to $(0,w,*)$. Then we record $0,2w$, which leads to a contradiction because $w \geq 2$.

Thus, the other two cases must occur. In particular, in one case we arrive at the block $(1,1,1)$ after having recorded $2u$. Then we record $0,1,1$, and move on. But after the clean-up, we get a digit $\geq 2u + 1$. This is a contradiction because $u \geq 2$.
\end{proof}

We have been unable to increase the lower bound on $C$ any further and therefore collect our results:

\begin{thm}\label{thm:B-bound}
For all irrational reals $\alpha$ we have
\[
    \sup_{k \geq 0} B(2^k \alpha) \geq 5.
\]
\end{thm}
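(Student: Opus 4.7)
The plan is to assume for contradiction that $B(2^k \alpha) \leq 4$ for every $k \geq 0$, and then apply \Cref{lem:B2,lem:B3,lem:B4} to successive shifts $2^k\alpha$ in order to reach a contradiction. First, for each $k \geq 0$, \Cref{lem:B4} applied to $\alpha' = 2^k\alpha$ forbids $B(4\alpha') = B(2^{k+2}\alpha) = 4$, since the lemma would otherwise produce a shift of $\alpha'$ with $B \geq 5$. Combined with the standing hypothesis, this yields $B(2^j\alpha) \leq 3$ for every $j \geq 2$.

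Next, I would apply \Cref{lem:B3} to $\alpha' = 2^i\alpha$ for each $i \geq 2$. If $B(4\alpha') = B(2^{i+2}\alpha) = 3$ and $2^{i+2}\alpha \not\sim [\overline{3;1,1}]$, the lemma produces some $k \in \{0,1,3,4\}$ with $B(2^{i+k}\alpha) \geq 4$, contradicting the previous step since $i+k \geq 2$. Combined with the bound $B(2^n\alpha) \leq 3$, this gives the dichotomy that for every $n \geq 4$, either $B(2^n\alpha) \leq 2$ or $2^n\alpha \sim [\overline{3;1,1}]$.

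Finally, the proof splits according to which side of the dichotomy holds for infinitely many $n$. If $B(2^n\alpha) \leq 2$ for all sufficiently large $n$, then \Cref{lem:B2} applied to $\alpha' = 2^n\alpha$ for such an $n$ contradicts $\max\{B(2^{n-1}\alpha), B(2^{n+1}\alpha)\} \geq 3$. Otherwise infinitely many $n$ satisfy $2^n\alpha \sim [\overline{3;1,1}]$; fixing one such $n_0$ and letting $\alpha_0 = 2^{n_0}\alpha$, one obtains $2^m\alpha_0 \sim \alpha_0$ for arbitrarily large exponents $m = n - n_0$, and \Cref{lem:equiv_mipo} then forces $2^m$ to divide the leading coefficient $A$ of the minimal polynomial of $\alpha_0$ for arbitrarily large $m$, which is absurd.

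I expect the second branch of the dichotomy in the last step to be the main obstacle, since \Cref{thm:B-counterex-m} shows that the pattern $2^n\alpha \sim [\overline{3;1,1}]$ cannot be excluded by any finite or purely local analysis of the continued fraction algorithm --- for every $K$ there genuinely is a $\beta$ for which it persists up to $k = K$. The global obstruction from \Cref{lem:equiv_mipo}, namely that $\alpha_0 \sim 2^m \alpha_0$ implies $2^m \mid A$, is exactly the structural input needed to forbid the infinite version of this phenomenon and so close the argument.
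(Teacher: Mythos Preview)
Your proof is correct and follows essentially the same route as the paper: iterate \Cref{lem:B4}, \Cref{lem:B3}, \Cref{lem:B2} over the shifts $2^k\alpha$ to reduce to the situation where $2^n\alpha \sim [\overline{3;1,1}]$ infinitely often, and then rule that out via the divisibility obstruction of \Cref{lem:equiv_mipo}. The only notable difference is in the endgame: the paper, once some $2^{k_0}\alpha \sim [\overline{3;1,1}]$, locates the first $k_0+1$ where the equivalence breaks and uses \Cref{ex:311} and \Cref{lem:notcollide} to compute $B(2^{k_0+1}\alpha)=8$ explicitly, whereas you obtain the contradiction purely from $2^m\mid A$ without exhibiting a concrete witness.
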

\begin{proof}
For $4 \alpha \not\sim [\overline{3;1,1}]$, this follows from Lemmas \ref{lem:B2}, \ref{lem:B3} and \ref{lem:B4}.
Now assume $4 \alpha \sim [\overline{3;1,1}]$.
As in the proof of \Cref{lem:2outof3}, we cannot have $2^k \alpha \sim [\overline{3;1,1}]$ for all $k\geq 2$.
Hence, there must exist some $k_0 \geq 2$ with $2^{k_0}\alpha \sim [\overline{3;1,1}]$ and $2^{k_0+1}\alpha \not\sim [\overline{3;1,1}]$. It follows from \Cref{ex:311}, \Cref{ex:311-2} and \Cref{lem:notcollide} that in fact $B(2^{k_0+1} \alpha) = 8$.
\end{proof}

\begin{rem}\label{rem:5}
We have not been able to push the bound in \Cref{thm:B-bound} beyond $5$.
The issue is the following: Both $\alpha = [\overline{5,2,1,2}]$ and $\alpha = [\overline{5;1,2,2,1}]$ have the property $\alpha \sim \alpha/2 \sim (\alpha+1)/2$. 
Thus, by \Cref{lem:long_equ_blocks}, it can happen that $\alpha \sim 2\alpha \sim \dots \sim 2^K \alpha \sim [\overline{5,2,1,2}]$ or $\sim [\overline{5;1,2,2,1}]$ for arbitrarily large $K$.
But then, if we want to use the same type of arguments as in the proof of \Cref{lem:B3}, we have to account for these special cases, which seems rather unpleasant. In fact, we know from \Cref{thm:B-counterex-m} that we will run into this type of trouble every time we try to push beyond an odd number.
\end{rem}

\section{Comments and open problems}\label{sec:problems}

The number $[\overline{5;2,1,2}]$ from \Cref{rem:5} corresponds to 
$m = 5$ in \Cref{lem:alpham_shape}, that is, to the positive root of $x^2 - 5x - 2$.
On the other hand, the number $[\overline{5;1,2,2,1}]$ from \Cref{rem:5} does not correspond to an $\alpha$ from \Cref{lem:alpham_shape}. One can check that it is a root of $x^2 - 5x - 4$.
In fact, it is easy to find many other equivalence classes containing $\alpha$'s with the property $\alpha \sim \alpha/2 \sim (\alpha+1)/2$.

In a previous version of this paper, we pointed out that all examples we found had a similar digit structure: They were of the shape $\alpha \sim [\overline{m; a_1, \ldots, a_n}]$, where $m$ is an odd integer, $a_1 \cdots a_n$ is a palindrome, and $a_i \leq (m-1)/2$ for all $i$. We suggested that this might always be the case. However, in a personal communication, Menny Aka provided us with counterexamples. For instance, $\alpha = (1 + \sqrt{2089})/6$ has the property $\alpha \sim \alpha/2 \sim (\alpha+1)/2$, but the period is $14, 1, 3, 1, 1, 1, 2, 1, 6, 1, 8, 3, 1, 2, 3, 2, 4, 7, 2, 1, 1, 4, 2$.
This seem to be related to the following facts: the prime $2$ splits in the number field $\Q(\sqrt{2089})$, the two ideals lying above $2$ are principal, and the ideal class of $(2, \alpha)$ has odd order $>1$.

Moreover, Menny Aka commented that the following generalization of Lemma~\ref{lem:long_equ_blocks} 
and our examples can be proved using degenerate branches from \cite[Definition 4.7]{AkaShapira2018}: For every $K\in \N$ and every prime $p$, one can find a quadratic irrational $\beta$ with all the quadratic irrationals $\beta, p \beta,…, p^K \beta$ having the same periodic part.

It seems that with the right tools from algebraic number theory and dynamical systems, one should be able to solve the next problem.

\begin{probl}\label{probl:equ}
Characterize all equivalence classes that contain an element with the property $\alpha \sim \alpha/2 \sim (\alpha+1)/2$.
\end{probl}

A solution to Problem~\ref{probl:equ} might, however, not reveal much about shape of the periodic parts of such numbers.
From Lemmas~\ref{lem:alpham_equiv} and \ref{lem:alpham_shape} we know that all odd numbers $\geq 3$ are realized as maxima of the periodic parts.
But what about the period length? Does the set of irrational reals $\alpha$ with $\alpha \sim \alpha/2 \sim (\alpha+1)/2$ contain a quadratic irrationality whose periodic part has length $\ell$ for every $\ell\geq 3$? It is easy to check that when $\ell\in\{1,2\}$, no such numbers exists. Additionally, for $\ell=3$ it turns out that all such $\alpha$ are equivalent to $[\overline{3;1,1}]$.
Therefore, the following problem may be of interest:

\begin{probl}
    Let $\ell$ be a positive integer. How many inequivalent quadratic irrationalities $\alpha$ with period length $\ell$ and $\alpha\sim \alpha/2 \sim (\alpha+1)/2$ exist?
\end{probl}

Going back to the conjectures $2$LC and B$2$LC, they are of course still open, and one could try to further improve the lower bound for $\sup_{k \geq 0} M(2^k \alpha)$ and our bound $\sup_{k \geq 0} B(2^k \alpha) \geq 5$. Perhaps a solution to \Cref{probl:equ} might help with the latter.

\begin{probl}
    Improve the bound $\sup_{k \geq 0} B(2^k \alpha) \geq 5$.
\end{probl}

As mentioned in \Cref{sec:connections}, we do not know whether B$2$LC and $2$LC are equivalent. For example, it might be possible to prove that if there exists a B$2$LC counterexample, then there also exists a $2$LC counterexample (possibly with a different constant).

\begin{probl}\label{probl:$2$LC-B$2$LC}
    Are B$2$LC (\Cref{conj:B$2$LC}) and $2$LC (\Cref{conj:$2$LC}) equivalent?
\end{probl}

Finally, we are not aware of any metric results on B$2$LC.
\begin{probl}\label{probl:HD}
    Does the set of counterexamples to B$2$LC (\Cref{conj:B$2$LC}) have Hausdorff dimension 0?
\end{probl}

\section{Acknowledgements}

We are very grateful to Menny Aka for his comments on the first version of this paper.
Moreover, I.V.\ wants to thank Jeffrey Shallit for many helpful discussions and for pointing her to Hurwitz's algorithm. She also wants to thank Manuel Hauke for long and helpful discussions and for suggesting to consider $B$ instead of $M$. 

\bibliographystyle{habbrv}
\bibliography{refs}

\end{document}